\renewcommand{\leq}{\leqslant}
\renewcommand{\geq}{\geqslant}
\newcommand{\inner}[2]{\langle #1, #2 \rangle}
\newcommand{\innerl}[2]{\left\langle #1, #2 \right\rangle}
\newtheorem{theorem}{Theorem}[section]
\newtheorem{proposition}{Proposition}[section]
\DeclareMathOperator{\cov}{cov}
\DeclareMathOperator{\MSE}{MSE}
\DeclareMathOperator{\ess}{ess}
\DeclareMathOperator{\E}{\mathbb{E}\,}
\DeclareMathOperator{\tr}{tr}
\numberwithin{equation}{section}
\theoremstyle{plain}
\begin{document}

\begin{frontmatter}
\title{Finite impulse response models: A non-asymptotic analysis of the least squares estimator}
\runtitle{Finite impulse response models}

\begin{aug}
\author{\fnms{Boualem } \snm{Djehiche}\thanksref{}\ead[label=e1]{boualem@kth.se}},
\author{\fnms{Othmane } \snm{Mazhar}\thanksref{}\ead[label=e2]{othmane@kth.se}}
\\
\and 
\author{\fnms{Cristian } \snm{R. Rojas}\thanksref{}
\ead[label=e3]{cristian.rojas@ee.kth.se}
\ead[label=u1,url]{http://www.foo.com}}

\runauthor{B. Djehiche, O. Mazhar and C. R. Rojas.}

\affiliation{Royal Institute of Technology\thanksmark{m1}}

\address{Boualem Djehiche\\
Department of Mathematics\\
School of Engineering Sciences\\
KTH Royal Institute of Technology\\
100 44 Stockholm, Sweden.\\
\printead{e1}\\
\phantom{E-mail:\ }}

\address{Othmane Mazhar and Cristian R. Rojas\\
Division of Decision And Control Systems\\
School of Electrical Engineering and Computer Science\\
KTH Royal Institute of Technology\\
SE-100 44 Stockholm, Sweden.\\
\printead{e2}\\
\phantom{E-mail:\ }\printead*{e3}}

\end{aug}

\begin{abstract}
We consider a finite impulse response system with centered independent sub-Gaussian design covariates and noise components that are not necessarily identically distributed. We derive non-asymptotic near-optimal estimation and prediction bounds for the least squares estimator of the parameters. Our results are based on two concentration inequalities on the norm of sums of dependent covariate vectors and on the singular values of their covariance operator that are of independent value on their own and where the dependence arises from the time shift structure of the time series. These results generalize the known bounds for the independent case.
\end{abstract}


\begin{keyword}
\kwd{Finite impulse response, Least squares, Non-asymptotic estimation, Shifted random vector, Random covariance Toeplitz matrix, Concentration inequality}
\end{keyword}

\end{frontmatter}

\section{Introduction}

In the present article, we study general Finite Impulse Response (FIR) systems, which are in some sense the simplest discrete-time dynamical systems. More precisely, let $x = (x_t)_{t=2-T}^{\infty}$ be a sequence of independent random variables, corresponding to the input of the system, the integer $T\ge 1$ be the time lag of the response and $y = (y_t)_{t=1}^{\infty}$ the output of the system. General (nonlinear) FIR systems correspond to the following additive-noise time series:
\begin{equation*}
    y_t = f(x_t,x_{t-1},\dots,x_{t-T+1}) + \varepsilon_t,  \qquad t \in \llbracket  1,\infty \rrbracket, 
\end{equation*}
where $\llbracket  a,b \rrbracket$ denotes the set $\{n \in \mathbb{Z}\colon a \leq n \leq b \}$, $\varepsilon = (\varepsilon_t)_{t=1}^{\infty}$ is a sequence of unobserved, independent random variables that represents an additive noise to the system and $f:\mathbb{R}^T \to \mathbb{R}$ is an unknown regressor. Of special interest is the linear regressor case, where $a = [a_1,a_2,\dots,a_T]^\top \in \mathbb{R}^T$ is the parameter vector and
\begin{equation*}
    y_t = a_1 x_t + \dots + a_T x_{t-T+1} + \varepsilon_t, \qquad t \in  \llbracket  1,\infty \rrbracket. 
\end{equation*}
Upon observing the sequence $y$ up to time $N$, we obtain the linear regression model
\begin{equation}\label{eq: FIR as linear regression} 
    y_t = a^\top X_t +\varepsilon_t, \qquad  t \in  \llbracket  1,N \rrbracket 
\end{equation}
with 'time shifted' covariates
\begin{align} \label{eq:time_shifted}
X_t = [x_t \;\; x_{t-1} \;\; \cdots \;\; x_{t-T+1}]^\top.
\end{align}
We refer to the matrix $X= [X_1, X_2,\dots, X_N]^\top$ as the design matrix, to $\hat{\Sigma} := \frac{1}{N} X^\top X$ as the sample covariance matrix, to $y = [y_1,y_2,\dots,y_N]^\top $ as the output vector and to $\varepsilon = [\varepsilon_1,\varepsilon_2,\dots,\varepsilon_N]^\top $ as the noise vector, the linear FIR model in vector format becomes
\begin{equation}\label{eq: FIR as linear regression in vector format}
    y = Xa +\varepsilon.
\end{equation}
Note that the dynamical structure creates dependence between the covariates, which makes this setup different from the classical regression problem with independent covariate design.

The study of linear dynamical systems is currently an active area of research within the statistical learning community due to its potential relation to the reinforcement learning problem \cite{DBLP:journals/corr/abs-1802-03981,NIPS2017_7247,boczar2018finitedata,DBLP:journals/corr/abs-1902-01848} where the focus is mainly on providing non-asymptotic bounds on the accuracy of the estimated parameters of the linear state-space models \cite{DBLP:journals/corr/abs-1812-01251,pmlr-v75-simchowitz18a}. State-space models can be viewed as Infinite Impulse Response models with a special structure imposed on the infinite-dimensional vector $(a_t)_{t=1}^{\infty}$. FIR models are commonly viewed as an approximation of those models since their special structure typically imposes exponentially small coefficients for the tail of the sequence $(a_t)_{t=1}^{\infty}$; we refer to \cite{10.2307/1912559,Ljung:1999:SIT:293154} for more on this topic. In statistics, on the other hand, several authors have recently studied the non-asymptotic properties of various regression estimators and, in particular, the ones built with independent covariates. For the sample covariance matrix  with independent covariates, \citet{ADAMCZAK2011195} provides Restricted Isometric Property bounds for the sub-Gaussian case and \citet{yaskov2015} relaxes these conditions to bounds on the first $2+\alpha$ moments while still obtaining similar results which are shown to be optimal; the high-dimensional case is studied in \citet{koltchinskii2017} where comparable results are obtained for Gaussian covariate vectors while replacing their dimension by the stable rank of the covariance operator. The statistical properties of the least-squares estimator for the independent covariates case are studied in \citet{Oliveira2016} under a 4th moment assumption, and a statistically and computationally efficient algorithm is suggested and analyzed in \citet{JMLR:v18:16-335}. In general, a dependence structure --such as the one stemming from the dynamics of FIR models-- is detrimental to the behavior of estimators, as noted by \citet{dalalyan2017}, who studied the performance of the Lasso estimator under correlated design conditions. In the present work, we show that this is not the case for the least-squares estimator in the FIR model and that the obtained statistical properties are close to the ones obtained in the independent case. This is mainly due to the fact that for FIR models the  covariates are uncorrelated although they are dependent.

In Section 2, we introduce some notation and preliminaries related to the assumptions under which the results are obtained. In Section 3 we display the main results of the paper, Theorems \ref{thm: estimation error}, \ref{thm: oracle inequality} and \ref{thm: risk bound}  which provide high probability bounds on the estimation error and the prediction risk of the least-squares estimator based on two concentration inequalities on the norm of sums of dependent covariate vectors derived in Theorem \ref{thm: Bound on the multiplier process} and on the singular values of their covariance operator derived in Theorem \ref{thm: bound on the spectrum}, which are of independent value on their own. Section 3 provides a discussion on the optimality of the obtained results based on a Cram\`er-Rao risk lower bound (CRLB) in the Gaussian case. Finally, proofs of the two concentration inequalities (Theorems  \ref{thm: Bound on the multiplier process} and \ref{thm: bound on the spectrum}) are displayed in an appendix at the end of the paper.   

\section{Preliminaries and notation}
We denote by $(\Omega,\mathcal{F},\mathbb{P})$ the underlying probability space and by $\E$ the corresponding expectation operator. All norms will be distinguished by a subscript denoting the underlying normed space, except for the standard absolute value $|\cdot|$. The usual $p$-norms of a vector $x \in \mathbb{R}^n$ are denoted by
\begin{equation*}
    |x|_{p} = \left(\sum_{i=1}^n |x_i|^p \right)^{1/p} \quad \textrm{for }1 \leq p < \infty, \qquad |x|_{\infty} = \max \limits_{i \in \llbracket  1,n \rrbracket} |x_i|.
\end{equation*}
Denote by $l_p(\mathbb{Z})$ the standard $l_p$-spaces for infinite sequences of $\mathbb{R}^\mathbb{Z}$, the associated norms are 
\begin{equation*}
    |x|_{l_p(\mathbb{Z})} = \left(\sum_{i=-\infty}^\infty x_i^p \right)^{1/p} \quad \textrm{for}\,\,1 \leq p < \infty, \qquad |x|_{l_\infty(\mathbb{Z})} = \sup \limits_{i \in \mathbb{Z}} |x_i|.
\end{equation*}
The $p$-Schatten norm of an $N \times M$ matrix $A$
in the Schatten space $S_p$
is denoted by 
\begin{equation*}
    |A|_{S_p} = (\tr(A^\top A)^{p/2})^{1/p} \quad \textrm{for }1 \leq p < \infty, \qquad |A|_{S_\infty} = \max \limits_{|x|_2 \leq 1} |Ax|_2.  
\end{equation*}
Operators from $\mathbb{R}^\mathbb{Z}$ to $\mathbb{R}^\mathbb{Z}$ are identified with their corresponding infinite matrices and their operator norm will be denoted $|\cdot|_{2 \to 2}$.

The spaces  $L_p(\Omega,\mathcal{F},\mathbb{P}) = \{ x\colon \Omega \to \mathbb{R} \textrm{ measurable}\colon \ \E(|x|^p) < \infty  \}$ of random variables are endorsed with the $L_p$-norms
\begin{equation*}
    |x|_{L_p} = (\E(|x|^p))^{1/p} \quad \textrm{for}\,\, 1 \leq p < \infty, \qquad |x|_{L_\infty} = \ess \sup \limits_{\omega \in \Omega} |x(\omega)|,  
\end{equation*}
where $\ess \sup$ refers to the essential supremum with respect to the measure $\mathbb{P}$.

We study the least squares estimator in the context of 'sub-Gaussian random variables'. To introduce this class of random variables, we need to describe the tail behavior of a random variable. Define, for $0 <\alpha < \infty$, the function  
\begin{equation*}
    \psi_\alpha(s) = \exp(s^\alpha)-1, \quad s\ge 0.
\end{equation*}
The Orlicz norm $| \cdot |_{\psi_\alpha}$ of a random variable $x$ is defined as
\begin{equation*}
    |x|_{\psi_\alpha} = \inf\lbrace c >0\colon \E(\psi_\alpha (|x|/c)) \le 1 \rbrace,
\end{equation*}
and the corresponding Orlicz space is 
\begin{equation*}
     L_{\psi_\alpha}(\Omega,\mathcal{F},\mathbb{P}) = \{ x\colon \Omega \to \mathbb{R} \textrm{ measurable}; \ |x|_{\psi_\alpha} < \infty  \}.
\end{equation*}
Note that we recover the definition of $L_p$ space by taking $\psi_p (s)= s^p$ in the definition of $\psi_\alpha$. $L_{\psi_2}(\Omega,\mathcal{F},\mathbb{P})$ is the space of sub-Gaussian random variables, \emph{i.e.}, whose distribution exhibits a tail behavior similar to that of the normal distribution. $L_{\psi_1}(\Omega,\mathcal{F},\mathbb{P})$ is the space of sub-exponential random variables, whose tail behavior is similar to that of an exponential random variable. For more information on Orlicz spaces we refer to \cite{doi:10.1137/1005082}.

A random vector $X \in \mathbb{R}^n$ is sub-Gaussian if for all $u \in \mathbb{R}^n$ the marginal $\inner{X}{u}$ is a sub-Gaussian random variable. The sub-Gaussian norm of $X$ in this case is
\begin{equation*}
    |X|_{\psi_2} = \sup \limits_{u \in \mathbb{R}^n,|u|_2= 1} |\inner{u}{X}|_{\psi_2}. 
\end{equation*}

Finally, we recall a few concepts from the generic chaining literature \cite{talagrand2006generic,gine_nickl_2015}. Let $(\mathcal{A},d)$ be a metric space. The distance of a point $t \in \mathcal{A}$ to a subset $\mathbb{A} \subseteq \mathcal{A}$ is defined as
\begin{equation*}
    d(t,\mathbb{A}) = \inf \limits_{s \in \mathbb{A} } d(t,s).
\end{equation*}
The diameter of the set $\mathbb{A}$ is 
\begin{equation*}
    \Delta(\mathbb{A}) = \sup \limits_{(s,t) \in \mathbb{A}^2} d(t,s),
\end{equation*}
and the covering number $N(\mathcal{A},d,u)$ is the smallest number of balls in $(\mathcal{A},d)$ of radius less than $u$  needed to cover $\mathcal{A}$ (\emph{i.e.}, whose union includes $\mathcal{A}$). A ball of center $c \in \mathcal{A}$ and radius $r\ge 0$ with respect to a distance $d$ or a metric $|\cdot|_d$ will be denoted $B_d(c,r)$ or $B_{|\cdot|_d}(c,r)$, respectively.

The gamma-$\alpha$ functional $\gamma_\alpha(\mathcal{A},d)$ for the metric space $(\mathcal{A}, d)$ and its corresponding upper bound by the Dudley chaining integral are defined as follows: 
\begin{align}\label{ekv}
    \gamma_\alpha(\mathcal{A},d) = \inf \sup \limits_{t \in \mathcal{A}} \sum \limits_{r= 0}^{\infty} 2^{r/\alpha} d(t,\mathbb{A}_r) 
    \lesssim \int_{0}^{ \Delta(\mathcal{A})} (\ln N(\mathcal{A}, d,u))^{1/\alpha}du.
\end{align}
Where the infimum is taken over all sequences of sets $(\mathbb{A}_r)_{r \in \mathbb{N}}$ in $\mathcal{A}$ with $|\mathbb{A}_0|=1$ and $|\mathbb{A}_r| \leq 2^{2^r}$ (\cite{talagrand2006generic}).

Throughout the paper, $C$, $c$, $c_1$, $\dots$ will denote positive constants whose exact values are not important for the derivation of the results and which may change from one line to another. $x \lesssim y$ is a shorthand notation for the statement 'there exists a positive constant $c$ such that $x \leq cy$', and $x \simeq y$ means that $x \lesssim y$ and $y \lesssim x$. The minimum (maximum) of two real numbers $x$ and $y$ is denoted as $\min(x,y)= x \wedge y$ ($\max(x,y)= x \vee y$).

\section{Results}
Let the integer $T \geq 1$ be a fixed time lag. We are interested in  estimating the parameters of an FIR model, based on a single sample from the trajectory of the FIR system, denoted by 
$$
S =(x,y) = ((x_t)_{t=2-T}^{\infty},(y_t)_{t=1}^{\infty}),
$$
over the interval $\llbracket  2-T, \dots, N\rrbracket$. 

\noindent
The FIR sample satisfies the relation (\emph{cf.} \eqref{eq:time_shifted})
\begin{equation} \label{FIR systems}
    y_t = f(X_t) +\varepsilon_t, \qquad t \in  \llbracket  1,N \rrbracket. 
\end{equation}
where $f\colon \mathbb{R}^T \to \mathbb{R}$ is an unknown regressor in the general additive system case. In the linear case, the FIR system is described by 
\begin{equation} \label{linear FIR systems}
    y_t = a^\top X_t +\varepsilon_t, \qquad t \in  \llbracket  1,N \rrbracket, 
\end{equation}
where $a$ is an unknown parameter vector in $\mathbb{R}^T$. We will impose two different sets of conditions on the independent centered disturbances $\varepsilon_1, \varepsilon_2, \dots, \varepsilon_N$:
\begin{itemize}
    \item In Theorem \ref{thm: estimation error} and \ref{thm: oracle inequality} the disturbances are sub-Gaussian with common $\psi_2$-norm upper bound: 
    $$| \varepsilon_t |_{\psi_2} \leq L_\varepsilon, \quad t \in  \llbracket  1,N \rrbracket. $$
    \item In Theorem \ref{thm: risk bound} we relax these conditions to the existence of a common variance upper bound: 
    $$\E (\varepsilon_t^2)  \leq \sigma_\varepsilon^2, \quad t \in  \llbracket  1,N \rrbracket. $$
\end{itemize}

Similarly, we assume that the random variables $x_t$ are centered, unit variance with a common $\psi_2$-norm upper bound 
$$
| x_t |_{\psi_2} \leq L_x, \quad t \in  \llbracket  2-T,N \rrbracket.
$$ 
We define the least squares estimator $\hat{a}$ of the parameter vector $a$ of the linear FIR system \eqref{linear FIR systems} by
\begin{equation} \label{least square estimate}
    \hat{a} = (X^\top X)^\dagger X^\top y =(\hat{\Sigma})^\dagger \frac{1}{N}X^\top y.
\end{equation}
Here, $A^\dagger$ denotes the Moore-Penrose pseudo-inverse of a square matrix $A$. In the first main result of the paper, Theorem \ref{thm: estimation error} below, we derive a non-asymptotic high probability upper bound for the performance of $\hat{a}$ in terms of the square error $|\hat{a}  - a|_2^2$.
\begin{theorem} \label{thm: estimation error}
Let $X_1, \dots, X_N$ be the time shifted covariates (\emph{cf.} \eqref{eq:time_shifted}) of the linear FIR system \eqref{linear FIR systems} with independent centered sub-Gaussian entries of unit variance and common $\psi_2$-norm upper bound $L_x$, and assume that the noise entries $(\varepsilon_t)_{t=1}^N$ have a common $\psi_2$-norm upper bound $L_\varepsilon$. Then, there is an absolute constant $C > 0$ such that, for all $\delta \in (0,1)$, $\eta \in (0,\ 2e^{-1}]$ and as long as
\begin{equation*}
    N \geq C (L_x^2 \vee L_x^4) \frac{T\ln(T \vee \eta^{-1})}{\delta^2},
\end{equation*}
the least squares estimator $\hat{a}$ of $a$ satisfies
\begin{equation*}
     | \hat{a}  - a|_2^2 \leq \frac{2}{(1-\delta)^2}\frac{T}{N} \left(1 +  CL_x^2L_\varepsilon^2 \ln^2\frac{2}{\eta} \right),
\end{equation*}
with probability at least $1-\eta$.
\end{theorem}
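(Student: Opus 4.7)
The plan is to reduce the estimation error to two quantities already controlled by the two main concentration inequalities of the paper, namely the spectral bound on $\hat{\Sigma}$ (Theorem \ref{thm: bound on the spectrum}) and the multiplier process bound on $\frac{1}{N}X^{\top}\varepsilon$ (Theorem \ref{thm: Bound on the multiplier process}). The starting point is the standard identity for the least squares residual: on the event that $\hat{\Sigma}$ is invertible, the model \eqref{linear FIR systems} combined with the definition \eqref{least square estimate} gives
\begin{equation*}
    \hat{a}-a = \hat{\Sigma}^{-1}\,\frac{1}{N}X^{\top}\varepsilon,
\end{equation*}
so that
\begin{equation*}
    |\hat{a}-a|_2 \;\leq\; \bigl|\hat{\Sigma}^{-1}\bigr|_{S_\infty}\;\Bigl|\frac{1}{N}X^{\top}\varepsilon\Bigr|_2.
\end{equation*}

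First I would invoke Theorem \ref{thm: bound on the spectrum} to show that under the sample-size condition $N \geq C(L_x^2\vee L_x^4) T\ln(T\vee \eta^{-1})/\delta^2$, the event $\mathcal{E}_1 = \{|\hat{\Sigma}-I|_{S_\infty}\leq \delta\}$ holds with probability at least $1-\eta/2$. On this event, every eigenvalue of $\hat{\Sigma}$ lies in $[1-\delta,1+\delta]$, in particular $\hat{\Sigma}$ is invertible and $|\hat{\Sigma}^{-1}|_{S_\infty}\leq 1/(1-\delta)$. The logarithmic factor and the $L_x^2\vee L_x^4$ dependence in the sample-size requirement are exactly those delivered by that theorem; here I would match its probability parameter to $\eta/2$.

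Next I would apply Theorem \ref{thm: Bound on the multiplier process} to the centered sub-Gaussian noise vector $\varepsilon$ and the time-shifted design, which yields, with probability at least $1-\eta/2$, a bound of the form
\begin{equation*}
    \Bigl|\frac{1}{N}X^{\top}\varepsilon\Bigr|_2^2 \;\leq\; \frac{2T}{N}\Bigl(1 + C L_x^2 L_\varepsilon^2\ln^2\tfrac{2}{\eta}\Bigr),
\end{equation*}
where the factor $T$ reflects the dimension of $X_t$, the $\ln^2(2/\eta)$ term absorbs the sub-Gaussian deviation, and the constraint $\eta\leq 2e^{-1}$ is needed so the log is bounded below by a positive absolute constant. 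Call the event on which this inequality holds $\mathcal{E}_2$.

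Finally I would combine both events by a union bound. On $\mathcal{E}_1\cap\mathcal{E}_2$, which has probability at least $1-\eta$, squaring the displayed inequality above and plugging the two estimates in gives
\begin{equation*}
    |\hat{a}-a|_2^2 \;\leq\; \frac{1}{(1-\delta)^2}\,\Bigl|\frac{1}{N}X^{\top}\varepsilon\Bigr|_2^2 \;\leq\; \frac{2}{(1-\delta)^2}\,\frac{T}{N}\Bigl(1 + C L_x^2 L_\varepsilon^2\ln^2\tfrac{2}{\eta}\Bigr),
\end{equation*}
which is exactly the claimed estimate. The main obstacle in this scheme is making the constants in Theorems \ref{thm: bound on the spectrum} and \ref{thm: Bound on the multiplier process} line up cleanly: one must calibrate the deviation parameter of the spectral theorem to $\delta$ and the deviation parameter of the multiplier theorem to $\eta/2$ simultaneously, so that the $(1-\delta)^{-2}$ prefactor and the $\ln^2(2/\eta)$ factor end up multiplicatively separated as in the stated bound rather than entangled. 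Everything else is a direct application of the two concentration results, which do the real work.
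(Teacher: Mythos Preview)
Your proposal is correct and follows essentially the same route as the paper: write $\hat a - a = \hat\Sigma^{-1}\frac{1}{N}X^\top\varepsilon$ on the event that $\hat\Sigma$ is well-conditioned, apply Theorem~\ref{thm: bound on the spectrum} with probability $\eta/2$ to bound $s_{\min}(\hat\Sigma)^{-1}\le(1-\delta)^{-1}$, apply Theorem~\ref{thm: Bound on the multiplier process} with probability $\eta/2$ to bound the multiplier term, and combine via a union bound. The paper's proof is exactly this, with the same calibration of the two probability parameters to $\eta/2$ and the same use of the constraint $\eta\le 2e^{-1}$ to reduce the $\ln\vee\ln^{3/4}$ in Theorem~\ref{thm: Bound on the multiplier process} to a pure $\ln$ before squaring.
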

Moreover, we can make explicit the sample complexity, \emph{i.e.}, the sample size $N$ required to obtain a reliable estimate of the square error with some desired precision $\epsilon >0 $ and probability at least $1 - \eta$. So for an estimation error $| \hat{a}  - a|_2^2 \leq \epsilon$, it enough to choose 
\begin{equation*}
     N \geq C \frac{1}{\epsilon}\left( \frac{T\ln^2(\eta^{-1})}{(1-\delta)^2}  \vee \frac{T\ln(T \vee \eta^{-1})}{\delta^2} \right),
\end{equation*}
for some constant $C>0$ depending on $L_x $and $L_\epsilon$. Next, we show that a similar result holds up to a bias term, for the general case with an unknown regressor $f\colon \mathbb{R}^T\to \mathbb{R}$. \\
Given $a \in \mathbb{R}^T$, define the prediction loss by $\mathcal{L}(a)$
\begin{equation}\label{a-opt} 
\mathcal{L}(a) = \frac{1}{N}|y-Xa|_2^2.
\end{equation}
Since $\hat{a}$ is the least square estimator, it satisfies the orthogonality condition
\begin{equation}\label{eq: orthogoality condition}
    X^\top (y-X\hat{a})=0.
\end{equation}

In the next main result of the paper, Theorem \ref{thm: oracle inequality}, we give a high probability bound for the prediction loss in the miss-specified case. 
\begin{theorem} \label{thm: oracle inequality}
Let $X_1, \dots, X_N$ be the time shifted covariates of the general FIR system \eqref{FIR systems} with independent centered sub-Gaussian entries of unit variance and common $\psi_2$-norm upper bound $L_x$, and assume that the noise entries  $(\varepsilon_t)_{t=1}^N$ have unit variance with common $\psi_2$-norm upper bound $L_\varepsilon$. Then, there is an absolute constant $C > 0$ such that, for all $\delta \in (0,1)$, $\eta \in (0,\ 2e^{-1}]$ and as long as
\begin{equation*}
    N \geq C (L_x^2 \vee L_x^4) \frac{T\ln(T \vee \eta^{-1})}{\delta^2},
\end{equation*}
the miss-specification prediction loss satisfies
\begin{equation*}
     \mathcal{L}(\hat{a}) \leq \inf_{a \in \mathbb{R}^T} \mathcal{L}(a) + \frac{2}{1-\delta}\frac{T}{N} \sigma_\varepsilon\left(1 +  CL_x^2L_\varepsilon^2 \ln^2\frac{2}{\eta}  \right),
\end{equation*}
with probability at least $1-\eta$.
\end{theorem}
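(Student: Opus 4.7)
The plan is a bias-variance decomposition around the oracle linear predictor, reducing the miss-specified excess loss to a quadratic form in the noise which is then controlled by the two concentration inequalities already invoked in the proof of Theorem \ref{thm: estimation error}. Writing $y = f(X) + \varepsilon$ with $f(X) = (f(X_1), \dots, f(X_N))^\top$, I would let $a^* \in \arg\min_{a \in \mathbb{R}^T} \mathcal{L}(a)$ be an oracle minimizer; its first-order condition reads $X^\top(f(X) - Xa^*) = 0$, so that $Xa^*$ is the orthogonal projection of $f(X)$ onto $\mathrm{range}(X)$. Because $X(\hat a - a^*)$ also lies in $\mathrm{range}(X)$, the Pythagorean identity collapses the excess loss into
\[
\mathcal{L}(\hat a) - \mathcal{L}(a^*) = \frac{1}{N}\left|X(\hat a - a^*)\right|_2^2.
\]

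Substituting the closed-form expression $\hat a - a^* = \hat\Sigma^\dagger \tfrac{1}{N}X^\top(y - X a^*) = \hat\Sigma^\dagger \tfrac{1}{N}X^\top \varepsilon$ (where the second equality uses the oracle orthogonality to cancel the $f$-dependence) turns the excess loss into a quadratic form in the noise multiplier process:
\[
\mathcal{L}(\hat a) - \mathcal{L}(a^*) = \frac{1}{N^2}(X^\top \varepsilon)^\top \hat\Sigma^\dagger (X^\top \varepsilon) \leq \frac{|\hat\Sigma^\dagger|_{S_\infty}}{N^2}\, |X^\top \varepsilon|_2^2.
\]
On a common event of probability at least $1 - \eta$ (via a union bound on two events of probability $\geq 1 - \eta/2$ each), Theorem \ref{thm: bound on the spectrum} yields $\lambda_{\min}(\hat\Sigma) \geq 1 - \delta$ (hence $|\hat\Sigma^\dagger|_{S_\infty} \leq (1-\delta)^{-1}$) whenever $N$ exceeds the stated threshold, while Theorem \ref{thm: Bound on the multiplier process} supplies a bound of the form $|X^\top \varepsilon|_2^2 \lesssim N T \sigma_\varepsilon^2 \bigl(1 + L_x^2 L_\varepsilon^2 \ln^2(2/\eta)\bigr)$. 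Combining these two estimates delivers
\[
\mathcal{L}(\hat a) - \mathcal{L}(a^*) \leq \frac{C}{1-\delta} \cdot \frac{T}{N} \cdot \sigma_\varepsilon \left(1 + L_x^2 L_\varepsilon^2 \ln^2 \frac{2}{\eta}\right),
\]
which, after adjusting absolute constants, is the claimed inequality.

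The main obstacle is book-keeping rather than conceptual: the oracle $a^*$ is a measurable function of the random design $X$ (defined via the Moore-Penrose pseudo-inverse even when $\hat\Sigma$ is singular), and one must verify the orthogonality $X^\top(f(X) - X a^*) = 0$ almost surely and propagate the constants from Theorems \ref{thm: bound on the spectrum} and \ref{thm: Bound on the multiplier process} precisely so that the additive $1$ and the $\sigma_\varepsilon$ factor come out as stated. Once these details are discharged, the argument runs essentially in parallel to that of Theorem \ref{thm: estimation error}, with the Pythagorean reduction playing the crucial role of eliminating any dependence on the unknown regressor $f$.
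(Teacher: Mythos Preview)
Your argument is correct and lands on exactly the same final estimate as the paper, namely $\mathcal{L}(\hat a)-\inf_a\mathcal{L}(a)\le \frac{1}{s_{\min}(\hat\Sigma)}\bigl|\tfrac{1}{N}X^\top\varepsilon\bigr|_2^2$, after which Theorems~\ref{thm: bound on the spectrum} and~\ref{thm: Bound on the multiplier process} are invoked in the same way. The algebraic route to that estimate, however, is genuinely different. The paper works with an \emph{arbitrary} comparison point $a$, exploits the least-squares orthogonality $X^\top(y-X\hat a)=0$ to rewrite the cross term as $\tfrac{2}{N}\langle \hat a-a,X^\top\varepsilon\rangle$, and then applies the elementary inequality $-\alpha b^2+2cb\le c^2/\alpha$ with $\alpha=s_{\min}(\hat\Sigma)$. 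You instead fix the oracle $a^*$ at the outset, use its own first-order condition $X^\top(f(X)-Xa^*)=0$ to obtain the Pythagorean \emph{identity} $\mathcal{L}(\hat a)-\mathcal{L}(a^*)=\tfrac{1}{N}|X(\hat a-a^*)|_2^2$, and then substitute $\hat a-a^*=\hat\Sigma^\dagger\tfrac{1}{N}X^\top\varepsilon$. Your route is arguably tidier (an equality rather than a chain of inequalities) and makes the cancellation of $f$ completely transparent; the paper's route has the minor advantage that it delivers the same remainder term against \emph{every} $a\in\mathbb{R}^T$, not just the minimizer. One caveat: the first-order condition you state, $X^\top(f(X)-Xa^*)=0$, is the normal equation for minimizing $\tfrac{1}{N}|f(X)-Xa|_2^2$, which is indeed the loss the paper's own proof manipulates in its first displayed line, but it is \emph{not} the normal equation for $\mathcal{L}(a)=\tfrac{1}{N}|y-Xa|_2^2$ as written in \eqref{a-opt} (whose minimizer is $\hat a$ itself). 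Make this interpretation explicit so the reader is not confused by the discrepancy.
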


Now, let us evaluate the prediction performance of our estimates $X\hat{a}$. Suppose we are in the general case with an unknown regressor $f\colon \mathbb{R}^T\to \mathbb{R}$. Given $a \in \mathbb{R}^T$, define the prediction risk $\mathcal{R}(a)$ and its oracle minimizer $a_{\min}$ as
\begin{equation}\label{a-opt} 
\mathcal{R}(a) = \E(\mathcal{L}(a)|X)= \E(\frac{1}{N}|y-Xa|_2^2 \,|\, X) \quad \textrm{and} \quad a_{\min} \in \arg \min \limits_{a \in \mathbb{R}^T} \mathcal{R}(a),
\end{equation}
Since $x$ is sub-Gaussian and $\varepsilon$ has finite variance, $\E(y^2) < \infty$ and $\E(\hat{\Sigma}) = I_T$, the risk measure $\mathcal{R}(a)$ is well defined and is finite for all $a \in \mathbb{R}^T$. 
The normal equation of the minimum norm problem \eqref{a-opt} is
\begin{equation} \label{eq: normal equation}
    \E((X^\top(y - X a_{\min})|X)=0.
\end{equation}
In the next main result of the paper, Theorem \ref{thm: risk bound}, we give 
a high probability upper bound for the  oracle error $\mathcal{R}(\hat{a}) - \mathcal{R}(a_{\min})$. 
\begin{theorem} \label{thm: risk bound}
Let $X_1, \dots, X_N$ be the time shifted covariates of the general FIR system \eqref{FIR systems} with independent centered sub-Gaussian entries of unit variance and
common $\psi_2$-norm upper bound $L_x$, and assume that the noise entries  $(\varepsilon_t)_{t=1}^N$ have a common variance upper bound $\sigma_\varepsilon^2$. Then, there is an absolute constant $C > 0$ such that, for all $\delta \in (0,1)$, $\eta \in (0,e^{-1}]$ and as long as
\begin{equation*}
    N \geq C (L_x^2 \vee L_x^4) \frac{T\ln(T \vee \eta^{-1})}{\delta^2},
\end{equation*}
the misspecification error satisfies
\begin{equation*}
     0\leq \mathcal{R}(\hat{a}) - \mathcal{R}(a_{\min}) \leq \frac{1 + \delta}{1 - \delta}\frac{T}{N}\sigma_\varepsilon^2.
\end{equation*}
with probability at least $1-\eta$.
\end{theorem}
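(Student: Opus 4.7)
The plan is to reduce the excess risk $\mathcal{R}(\hat a) - \mathcal{R}(a_{\min})$ to a quadratic form in the noise $\varepsilon$ weighted by the projector onto the column span of $X$, and then control both factors using the sample-covariance concentration from Theorem \ref{thm: bound on the spectrum}.

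First, I would expand the definition: because $y = f(X) + \varepsilon$ with $\varepsilon$ independent of $X$, centered, and $\E(\varepsilon_t^2) \leq \sigma_\varepsilon^2$, for any deterministic $a \in \mathbb{R}^T$,
\begin{equation*}
\mathcal{R}(a) = \frac{1}{N}|f(X) - Xa|_2^2 + \frac{1}{N}\sum_{t=1}^N \E(\varepsilon_t^2).
\end{equation*}
The normal equation \eqref{eq: normal equation} rewrites as $X^\top(f(X) - X a_{\min}) = 0$ and so identifies $X a_{\min}$ as the orthogonal projection of $f(X)$ onto the column span of $X$; Pythagoras then yields
\begin{equation*}
\mathcal{R}(\hat a) - \mathcal{R}(a_{\min}) = \frac{1}{N}|X(\hat a - a_{\min})|_2^2 \geq 0,
\end{equation*}
which already settles the lower bound.

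Next I would invoke Theorem \ref{thm: bound on the spectrum} to place us, with probability at least $1-\eta$, on the event $E := \{(1-\delta)I_T \preceq \hat\Sigma \preceq (1+\delta)I_T\}$, which is exactly what the sample-size hypothesis buys us. On $E$ the design $X$ has full column rank, and subtracting the two normal equations $X^\top(y - X\hat a) = 0$ and $X^\top(f(X) - X a_{\min}) = 0$ gives $X^\top X(\hat a - a_{\min}) = X^\top \varepsilon$, whence $X(\hat a - a_{\min}) = P_X\varepsilon$ with $P_X = X(X^\top X)^{-1}X^\top$. Using $\hat\Sigma^{-1} \preceq (1-\delta)^{-1}I_T$ on $E$,
\begin{equation*}
\mathcal{R}(\hat a) - \mathcal{R}(a_{\min}) = \frac{1}{N^2}(X^\top \varepsilon)^\top \hat\Sigma^{-1}(X^\top\varepsilon) \leq \frac{1}{N^2(1-\delta)}|X^\top \varepsilon|_2^2.
\end{equation*}
Taking the conditional expectation over $\varepsilon$ and using the variance hypothesis together with $\tr(\hat\Sigma) \leq (1+\delta)T$ on $E$,
\begin{equation*}
\E(|X^\top\varepsilon|_2^2 \mid X) = \sum_{t=1}^N \E(\varepsilon_t^2) |X_t|_2^2 \leq \sigma_\varepsilon^2 \tr(X^\top X) \leq (1+\delta)T\sigma_\varepsilon^2 N,
\end{equation*}
which delivers the announced upper bound $\tfrac{1+\delta}{1-\delta}\tfrac{T}{N}\sigma_\varepsilon^2$.

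The principal obstacle is structural rather than computational: because only a variance hypothesis is available on $\varepsilon$, the quadratic form $\varepsilon^\top P_X\varepsilon$ cannot be tail-concentrated (Markov or Chebyshev would waste the whole budget), so the ``with probability at least $1-\eta$'' in the statement is really borne by the design-side event $E$, while the noise is absorbed into the conditional expectation step. This asymmetric treatment relative to Theorems \ref{thm: estimation error} and \ref{thm: oracle inequality} is precisely what dresses the information-theoretic rate $T\sigma_\varepsilon^2/N$ with the inflation $(1+\delta)/(1-\delta)$ rather than a logarithmic $\eta$-factor.
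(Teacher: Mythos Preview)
Your argument is correct and follows essentially the same route as the paper's proof: both rest on Theorem~\ref{thm: bound on the spectrum} to secure the event $E=\{(1-\delta)I_T\preceq\hat\Sigma\preceq(1+\delta)I_T\}$, and both finish with the same conditional-variance computation $\E(|X^\top\varepsilon|_2^2\mid X)\le \sigma_\varepsilon^2\,\tr(X^\top X)\le (1+\delta)NT\sigma_\varepsilon^2$.

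The only real difference is in how the quadratic inequality in $\hat a-a_{\min}$ is reached. The paper starts from the loss decomposition~\eqref{eq: loss decomposition}, takes conditional expectation, invokes the orthogonality condition, applies Cauchy--Schwarz, and then uses the elementary bound $-ab^2+2cb\le c^2/a$ with $a=s_{\min}(\hat\Sigma)$. You instead subtract the two normal equations to obtain the exact identity $X(\hat a-a_{\min})=P_X\varepsilon$, write the excess risk as the quadratic form $\tfrac{1}{N^2}(X^\top\varepsilon)^\top\hat\Sigma^{-1}(X^\top\varepsilon)$, and bound $\hat\Sigma^{-1}\preceq(1-\delta)^{-1}I_T$ directly. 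Your route is shorter and yields equality where the paper has an inequality chain, and it also makes the lower bound $\mathcal{R}(\hat a)-\mathcal{R}(a_{\min})\ge 0$ explicit via Pythagoras (the paper leaves this to the fact that $a_{\min}$ is a minimizer). Your closing remark that the probability budget is spent entirely on the design-side event~$E$ while the noise is handled in conditional expectation is exactly the mechanism the paper uses, even if it does not say so.
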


These last three Theorems extend the work on least squares for the case of independent covariates to the time shifted covariate case. In the independent covariates case, bounds are derived in \citet{audibert2011} and \citet{pmlr-v23-hsu12} under the sub-Gaussian noise condition and with probability larger than $\frac{1}{N}$ i.e non uniformly on the probability. In our case, we only need a finite noise variance condition to show a bound for any success probability. A break through result in the independent covariates case is the one obtained by \citet{Oliveira2016}, where Under a $4$th moment assumption and non uniformly on the probabilities, he derives similar results using Fuk-Nagaev bound by \citet{10.2307/20161536}.

The proofs of Theorems \ref{thm: estimation error}, \ref{thm: oracle inequality} and \ref{thm: risk bound} are based on two upper bound estimates. The first one is an estimate of the operator norm distance of the sample covariance to the identity operator $\left| \hat{\Sigma} - I_T \right|_{S_\infty}$, derived in Theorem \ref{thm: bound on the spectrum} below, in the form of Restricted Isometric Property (cf. \cite{Candes:2006:NSR:2263452.2272610}). The second one is an estimate of the $L_2$-norm of the multiplier process for the  FIR design covariates, $\frac{1}{N}X^\top \varepsilon$, given in Theorem \ref{thm: Bound on the multiplier process} below. Multiplier processes are important as they appear quite naturally in many high dimensional statistics' problems. The term 
\begin{equation*}
\left| \frac{1}{N}X^\top \varepsilon \right|_2 = \frac{1}{N} \sup_{|w|_2} \sum_{k =1}^N \varepsilon_k\innerl{w}{X_k} - \E(\varepsilon_k\innerl{w}{X_k})    
\end{equation*}
is seen in this way as the supremum of an empirical process yet it is not a multiplier process in the standard sense (see e.g. \cite{Mendelson2017}) since the $X_i$'s are dependent. 

\medskip

\begin{theorem}\label{thm: bound on the spectrum}
Let $X_1, \dots, X_N$ be the time shifted covariates of an FIR model with independent centered sub-Gaussian entries of common $\psi_2$-norm upper bound $L_x$. Then, for every $u \geq 1$ and $N \geq 2T-2$, we have 
\begin{equation} \label{eq: sup norm bound on the sample covariance}
        \left| \hat{\Sigma} - I_T \right|_{S_\infty} \lesssim  L_x^2\left(\frac{T\ln(T)}{N} + \sqrt{\frac{T\ln(T)}{N} }  + \frac{T}{N}u + \sqrt{\frac{T}{N} } u^{1/2} \right) .
\end{equation}
with probability at least $1-e^{-u}$.

In particular, we have the following bounds on the highest and smallest singular values $s_{\max} (\hat{\Sigma})$ and $s_{\min} (\hat{\Sigma})$ of the sample covariance matrix $\hat{\Sigma}$: there is an absolute constant $C > 0$ such that, for all $\delta \in (0,1)$, $\eta \in (0,e^{-1}]$ and
\begin{equation*}
    N \geq C (L_x^2 \vee L_x^4) \frac{T\ln(T \vee \eta^{-1})}{\delta^2},
\end{equation*}
we have
\begin{equation}\label{bound on the spectrum}
     1- \delta \leq s_{\min} (\hat{\Sigma}) \leq s_{\max} (\hat{\Sigma}) \leq 1+ \delta,
\end{equation}
with probability at least $1- \eta$.
\end{theorem}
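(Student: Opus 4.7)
The plan is to reduce the claim to controlling a supremum of a quadratic chaos in the independent underlying noises $\tilde{x} := (x_{2-T},\ldots,x_N)^\top$, and then apply a generic-chaining form of the Hanson-Wright inequality to that supremum. The key observation is that $Xv$ is the convolution of $\tilde{x}$ with the sequence $(v_1,\ldots,v_T)$, so we may write $Xv = M_v \tilde{x}$, where $M_v$ is the $N \times (N+T-1)$ banded Toeplitz matrix whose nonzero stripe contains $v_1,\ldots,v_T$. Setting $A_v := N^{-1} M_v^\top M_v$ and $\mathcal{A} := \{A_v : v \in S^{T-1}\}$, the variational form of the operator norm yields
\begin{equation*}
\left|\hat{\Sigma} - I_T\right|_{S_\infty} = \sup_{v \in S^{T-1}} \bigl| \tilde{x}^\top A_v \tilde{x} - \E(\tilde{x}^\top A_v \tilde{x}) \bigr|.
\end{equation*}

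With this reduction in hand, I would invoke a Hanson-Wright type concentration bound for the supremum of a sub-Gaussian quadratic chaos indexed by a set of matrices, in the spirit of Talagrand's generic chaining. Such a bound controls the tail in terms of the functional $\gamma_2(\mathcal{A},|\cdot|_{S_\infty})$ and the Frobenius and operator diameters $d_F(\mathcal{A}) := \sup_{A \in \mathcal{A}} |A|_F$, $d_{S_\infty}(\mathcal{A}) := \sup_{A \in \mathcal{A}} |A|_{S_\infty}$, producing a term of the shape $L_x^2\bigl[\gamma_2^2 + \gamma_2\, d_F + \sqrt{u}\, d_F + u\, d_{S_\infty}\bigr]$ with probability at least $1 - e^{-u}$. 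Matching this template against \eqref{eq: sup norm bound on the sample covariance} identifies the three geometric quantities to control: $d_F(\mathcal{A}) \lesssim \sqrt{T/N}$, $d_{S_\infty}(\mathcal{A}) \lesssim T/N$, and $\gamma_2(\mathcal{A},|\cdot|_{S_\infty}) \lesssim \sqrt{T\ln T / N}$.

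The two diameter estimates follow directly from the Toeplitz structure of $M_v$. Each row and each column of $M_v$ contains at most $T$ entries drawn from $\{v_1,\ldots,v_T\}$, so Schur's test yields $|M_v|_{S_\infty}^2 \leq |v|_1^2 \leq T$, hence $d_{S_\infty}(\mathcal{A}) \leq T/N$. A direct computation gives $\tr A_v = N^{-1}|M_v|_F^2 = |v|_2^2 = 1$, so the elementary inequality $|A|_F^2 \leq |A|_{S_\infty}\,\tr(A)$ delivers $d_F(\mathcal{A}) \leq \sqrt{T/N}$. These two contributions account for the $uT/N$ and $\sqrt{uT/N}$ terms of \eqref{eq: sup norm bound on the sample covariance}.

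I expect the main obstacle to be the chaining estimate $\gamma_2(\mathcal{A},|\cdot|_{S_\infty}) \lesssim \sqrt{T\ln T/N}$, which is responsible for the remaining (parameter-free) terms. A crude Lipschitz bound $|A_u - A_v|_{S_\infty} \lesssim (T/N)|u-v|_2$ combined with the Dudley integral over $S^{T-1}$ only yields order $T^{3/2}/N$, which is larger than the target by a factor $\sqrt{T/\ln T}$. Closing this gap requires exploiting the convolutional nature of $M_v$: for a Toeplitz matrix the operator norm is governed by the $L^\infty$ norm of its symbol $\hat{v}$, and chaining in the finer pseudometric $(u,v) \mapsto \sup_\omega |\widehat{u-v}(\omega)|$ on $S^{T-1}$ has Dudley integral of order $\sqrt{\ln T}$ rather than $\sqrt{T}$, since it reduces to the supremum of a sub-Gaussian process over $T$ Fourier frequencies. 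Propagating this through the product-rule identity $A_u - A_v = N^{-1}(M_{u-v}^\top M_u + M_v^\top M_{u-v})$ yields the claimed bound. Assembling the four contributions gives \eqref{eq: sup norm bound on the sample covariance}; the singular-value estimate \eqref{bound on the spectrum} then follows from Weyl's inequality together with a straightforward manipulation which reads off the sample-size threshold from the requirement that the right-hand side of \eqref{eq: sup norm bound on the sample covariance} be at most $\delta$ with $u \simeq \ln(1/\eta)$.
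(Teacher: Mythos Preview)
Your overall strategy—writing $|\hat\Sigma-I_T|_{S_\infty}$ as the supremum of a second-order chaos in the independent scalars $\tilde x$ and applying a Hanson--Wright/generic-chaining bound—is indeed the engine the paper uses. The decisive difference is the \emph{index set} over which the chaining is performed. You chain over the $(T-1)$-dimensional sphere $S^{T-1}$ (via $v\mapsto A_v$); the paper instead first splits $X=L+S$ so that $L^\top L$ is an exact Toeplitz matrix, embeds it into an infinite Toeplitz operator, and passes to its multiplier symbol. This turns the operator norm into $\sup_{\omega\in[0,1]}|p(\omega)|$, where $p(\omega)=\tilde x^\top\mathcal H(\omega)\tilde x$ is a quadratic form indexed by a \emph{one-dimensional} parameter. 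Hanson--Wright plus mixed-tail chaining over $[0,1]$ then gives the $\gamma_1$ and $\gamma_2$ functionals almost for free (Lipschitz estimates on $\omega\mapsto\mathcal H(\omega)$ in $S_2$ and $S_\infty$), yielding precisely the $T\ln T/N+\sqrt{T\ln T/N}$ terms. The boundary terms $S^\top S$ and $L^\top S$ are handled separately and contribute lower-order corrections.

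Your argument has a genuine gap at the step you yourself flag: the estimate $\gamma_2(\mathcal A,|\cdot|_{S_\infty})\lesssim\sqrt{T\ln T/N}$. The claim that the Dudley integral of $S^{T-1}$ in the symbol pseudometric $(u,v)\mapsto\|\widehat{u-v}\|_\infty$ is of order $\sqrt{\ln T}$ is false: the diameter of $S^{T-1}$ in this metric is already of order $\sqrt T$ (take $v=T^{-1/2}(1,\dots,1)$, for which $\|\hat v\|_\infty=\sqrt T$), and $\gamma_2$ is always at least the radius. The justification you give—``it reduces to the supremum of a sub-Gaussian process over $T$ Fourier frequencies''—would bound $\E_v\|\hat v\|_\infty$ for a \emph{random} $v$, not the chaining functional over the index set $S^{T-1}$; these are different objects. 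With the product rule and the (correct) bound $|M_w|_{S_\infty}\le\|\hat w\|_\infty$, the best one gets along these lines is $\gamma_2(\mathcal A,|\cdot|_{S_\infty})\lesssim N^{-1}\sqrt T\,\gamma_2(S^{T-1},\|\hat\cdot\|_\infty)$, and a careful Dudley/Maurey computation yields $\gamma_2(S^{T-1},\|\hat\cdot\|_\infty)\lesssim\sqrt T\,(\ln T)^{3/2}$, which is larger than what you need by logarithmic factors and in any case not the $\sqrt{\ln T}$ you assert. The paper's reduction to a one-dimensional frequency parameter is exactly what circumvents this high-dimensional chaining; without an analogous trick, your route does not recover the stated bound.
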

The conditions $N \geq 2T-2$ and $u \geq 1$ for deriving \eqref{eq: sup norm bound on the sample covariance} and $\eta \leq e^{-1}$ for deriving \eqref{bound on the spectrum} are mainly technical and are introduced to simplify the proof. This result extends the work by \citet{meckes2007} on random square symmetric Toeplitz matrices ($N = T$) with independent entries satisfying a log-Sobolev condition to the rectangular case with sub-Gaussian design matrix $X$. It also complements the work by \citet{10.1007/978-3-0348-0490-5_16}, who proves the following bound on the expectation of a random rectangular Toeplitz matrix $X$:
\begin{equation*} 
    \E (| X |_{S_\infty} ) \lesssim \sqrt{N} + \sqrt{T\log (T)}.
\end{equation*}

An optimal result for the case of i.i.d Gaussian covariates has been obtained by \citet{koltchinskii2017} and shows that with probability at least $1-e^{-u}$:
\begin{equation*} 
    \left| \hat{\Sigma} - I_T \right|_{S_\infty} \lesssim   \frac{T}{N}  + \sqrt{\frac{T}{N} }  + \frac{T}{N}u + \sqrt{\frac{T}{N} } u^{1/2} .
\end{equation*}
This raises the question of the necessity of the having $\ln(T)$ factors in Theorem \ref{thm: bound on the spectrum}.

\begin{theorem}\label{thm: Bound on the multiplier process}
Let $X_1, \dots, X_N$ be the time shifted covariates of an FIR model with independent centered sub-Gaussian entries of common $\psi_2$-norm upper bound $L_x$, and assume that the noise entries $(\varepsilon_t)_{t=1}^N$ have unit variance with common $\psi_2$-norm upper bound $L_\varepsilon$. Then, for every $\eta \in (0, 1)$, we have 
\begin{equation}\label{Bound on the multiplier process}
    \left| \frac{1}{N}X^\top \varepsilon \right|_2 \leq \sqrt{\frac{T}{N}} \sigma_x \sigma_\varepsilon \left[1+c L_\varepsilon L_x \left(\ln\frac{1}{\eta} \vee \ln^\frac{3}{4}\frac{1}{\eta}\right) \right],
\end{equation}
with probability at least $1-\eta$.
\end{theorem}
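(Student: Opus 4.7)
The strategy is to pass to the dual representation of the Euclidean norm, condition on the design matrix $X$ to reduce the problem to the supremum of a sub-Gaussian process indexed by the sphere, and then close the argument with the spectral bound of Theorem \ref{thm: bound on the spectrum}.

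First, by duality,
\begin{equation*}
\left|\tfrac{1}{N}X^\top \varepsilon\right|_2 \;=\; \sup_{w\in S^{T-1}} Z_w, \qquad Z_w \;:=\; \tfrac{1}{N}\sum_{t=1}^N \varepsilon_t \inner{X_t}{w}.
\end{equation*}
Because $\varepsilon$ is independent of $X$ with independent sub-Gaussian entries bounded by $L_\varepsilon$ in $\psi_2$-norm, Hoeffding's inequality for weighted sums of sub-Gaussians yields that, conditionally on $X$, $(Z_w)_{w\in S^{T-1}}$ is a centred sub-Gaussian process with respect to the random metric
\begin{equation*}
d_X(w,w') \;:=\; \frac{cL_\varepsilon}{N}\,|X(w-w')|_2.
\end{equation*}

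Next I would apply a generic-chaining deviation inequality for sub-Gaussian processes from \cite{talagrand2006generic} conditionally on $X$: with conditional probability at least $1-\eta/2$,
\begin{equation*}
\sup_{w\in S^{T-1}} Z_w \;\le\; C\bigl[\gamma_2(S^{T-1},d_X) + \Delta(S^{T-1},d_X)\sqrt{\ln(2/\eta)}\bigr].
\end{equation*}
The image $X(B_2^T)$ is an ellipsoid whose semiaxes are the singular values of $X$, so the majorizing-measure theorem gives the Frobenius-type estimate $\gamma_2(S^{T-1},d_X)\lesssim L_\varepsilon|X|_F/N$, while $\Delta(S^{T-1},d_X)\lesssim L_\varepsilon|X|_{S_\infty}/N$.

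Then I would invoke Theorem \ref{thm: bound on the spectrum} to control the design. The identity $|X|_F^2 = N\tr(\hat\Sigma)$ together with \eqref{bound on the spectrum} gives $|X|_F^2\lesssim NT\sigma_x^2$, and the quantitative estimate \eqref{eq: sup norm bound on the sample covariance} taken at $u=\ln(1/\eta)$ yields $|X|_{S_\infty}^2\lesssim N\sigma_x^2(1+\delta(\eta))$ on an event of probability $1-\eta/2$. Substituting and applying a union bound over the two failure events produces an inequality of the desired form
\begin{equation*}
\left|\tfrac{1}{N}X^\top\varepsilon\right|_2 \;\lesssim\; \sigma_x\sigma_\varepsilon\sqrt{\tfrac{T}{N}}\,\bigl[1 + cL_xL_\varepsilon\,F(\eta)\bigr]
\end{equation*}
for some explicit tail factor $F(\eta)$.

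The main technical hurdle is to identify the precise tail factor $F(\eta)=\ln(1/\eta)\vee\ln^{3/4}(1/\eta)$ rather than the naive $\sqrt{\ln(1/\eta)}$ produced by pure sub-Gaussian chaining. The worse exponents must originate in the randomness of $|X|_{S_\infty}$, since \eqref{eq: sup norm bound on the sample covariance} already carries deviation terms of orders $(T/N)u$ and $\sqrt{T/N}\,u^{1/2}$; multiplying their contribution to the diameter by the chaining factor $\sqrt{\ln(2/\eta)}$ presumably interpolates between $\ln^{3/4}(1/\eta)$ for moderate $\eta$ and a linear $\ln(1/\eta)$ in the small-$\eta$ regime. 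Calibrating $\delta(\eta)$ against the chaining deviation so that the two failure events at level $\eta/2$ balance, and then combining the two regimes into the single expression $\ln(1/\eta)\vee\ln^{3/4}(1/\eta)$ of \eqref{Bound on the multiplier process}, is where most of the care will be required.
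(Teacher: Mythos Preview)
Your approach diverges from the paper's at exactly the point you flag as the main hurdle, and your conjectured mechanism for the $\ln^{3/4}$ exponent is not the one that actually operates.

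The paper does not use chaining and does not invoke Theorem~\ref{thm: bound on the spectrum}. After computing $\E\bigl|\tfrac{1}{N}X^\top\varepsilon\bigr|_2^2 = T/N$ exactly, it conditions on $X$ and observes that $|X^\top\varepsilon|_2-\sqrt{TN}$ is conditionally sub-Gaussian with squared parameter $\lesssim L_\varepsilon^2\,T\sum_{k=2-T}^{N} x_k^2$. The unconditional Laplace transform is therefore controlled by $\E\exp\bigl(cs^2 L_\varepsilon^2 T\sum_k x_k^2\bigr)$, a product of sub-exponential moment generating functions; for $s\lesssim (L_xL_\varepsilon\sqrt{TN})^{-1}$ this is at most $\exp(c_1 s^4 T^2 N^2 L_x^4 L_\varepsilon^4)$. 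Optimising Chernoff's bound against this \emph{quartic} exponent produces the sub-Weibull tail $\exp\bigl(-c(N/T)^{2/3}\rho^{4/3}\bigr)$, which is where the $\ln^{3/4}(1/\eta)$ comes from; the linear $\ln(1/\eta)$ regime arises when the optimiser hits the constraint on $s$.

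Your proposed source for these exponents---feeding the deviations of $|X|_{S_\infty}$ from \eqref{eq: sup norm bound on the sample covariance} into the diameter and then multiplying by the chaining factor $\sqrt{\ln(2/\eta)}$---does not reproduce them. On the high-probability event the diameter is of order $L_\varepsilon/\sqrt{N}$, so that term contributes $\sqrt{T/N}\cdot L_\varepsilon\sqrt{\ln(2/\eta)/T}$, which is smaller than the claimed bound and carries neither a $3/4$ nor a linear power of $\ln(1/\eta)$. If you complete your outline you will obtain a different inequality (with a sub-Gaussian deviation, but with leading constant $CL_\varepsilon$ in place of the sharp $\sigma_x\sigma_\varepsilon$ that the exact second-moment computation delivers), and you will have imported the side condition $N\geq 2T-2$ from Theorem~\ref{thm: bound on the spectrum}, which Theorem~\ref{thm: Bound on the multiplier process} does not assume.
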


The proof of these last two Theorems is deferred to the appendix. 


\begin{proof}[Proof of Theorem \ref{thm: estimation error}]
We have
\begin{align}
        | \hat{a}  - a|_2  &= | (X^\top X)^\dagger X^\top y  - a|_2 \nonumber 
        = | (X^\top X)^\dagger X^\top (Xa + \varepsilon)  - a|_2 \nonumber  \\
        &= | (X^\top X)^\dagger X^\top \varepsilon|_2 \nonumber \leq s_{\max} ((\hat{\Sigma})^\dagger)  \left|\frac{1}{N}X^\top \varepsilon \right|_2 \nonumber \\
        &= \frac{1}{s_{\min}(\hat{\Sigma})}   \left|\frac{1}{N}X^\top \varepsilon \right|_2. \label{eq: prediction error decomposition}
\end{align}
Replacing $\eta$ with $\eta/2$ in Theorems~\ref{thm: bound on the spectrum} and \ref{thm: Bound on the multiplier process}, a simple union bound shows that, for 
\begin{equation*}
    N \geq C (L_x^2 \vee L_x^4) \frac{T\ln(T \vee 2\eta^{-1})}{\delta^2},
\end{equation*}
the matrix $\hat{\Sigma}$ is invertible with probability at least $1-\eta$. Furthermore, on this event, the bounds in equations \eqref{bound on the spectrum} and \eqref{Bound on the multiplier process} hold, yielding upper bounds for both the inverse of the least singular value, namely $  \frac{1}{s_{\min}(\hat{\Sigma})} \leq \frac{1}{1-\delta}$, and the multiplier process $|\frac{1}{N}X^\top \varepsilon |_2$ which together imply the claimed result.
\end{proof}

\begin{proof}[Proof of Theorem \ref{thm: oracle inequality}]
The loss miss-specification bound is obtained as follows.
\begin{align}
    \mathcal{L}(\hat{a}) - \mathcal{L}(a)&= \frac{1}{N}|X\hat{a}-f|_2^2- \frac{1}{N}|Xa-f|_2^2\nonumber \\
    &= \frac{1}{N}\innerl{X\hat{a}-Xa}{X\hat{a}+Xa-2f} \nonumber  \\
    &= -\frac{1}{N}|X\hat{a}-Xa|_2^2+\frac{2}{N}\innerl{X\hat{a}-Xa}{X\hat{a}-f}. \label{eq: loss decomposition}
\end{align}
Using the orthogonality condition in \eqref{eq: orthogoality condition} we get, 
\begin{align*}
   \mathcal{L}(\hat{a}) &= \mathcal{L}(a)  -\frac{1}{N}|X\hat{a}-Xa|_2^2+\frac{2}{N}\innerl{\hat{a}-a}{X^\top \varepsilon}\\
   &\leq \mathcal{L}(a) -\frac{1}{N}|X(\hat{a}-a)|_2^2+2|\hat{a}-a|_2\left|\frac{X^\top \varepsilon}{N}\right|_2\\
   &\leq \mathcal{L}(a) -s_{\min}(\hat{\Sigma})|\hat{a}-a|_2^2+2|\hat{a}-a|_2\left|\frac{X^\top \varepsilon}{N}\right|_2 \\
   &\leq \mathcal{L}(a) + \frac{1}{s_{\min}(\hat{\Sigma})}\left|\frac{X^\top \varepsilon}{N}\right|_2^2,
\end{align*}
where in the last inequality we used the elementary identity $-ab^2+2cb \leq c^2/a$. From Theorem \ref{thm: bound on the spectrum} and \ref{thm: Bound on the multiplier process} we conclude that  for a sample size 
\begin{equation*}
    N \geq C (L_x^2 \vee L_x^4) \frac{T\ln(T \vee 2\eta^{-1})}{\delta^2},
\end{equation*}
and on an event of probability at least $1-\eta$ both $s_{\min}(\hat{\Sigma}) \geq 1-\delta$ and
\begin{equation}
    \left| \frac{1}{N}X^\top \varepsilon \right|_2 \leq  \sqrt{\frac{T}{N}}  \sigma_\varepsilon \left[1+C L_\varepsilon L_x \left(\ln\frac{2}{\eta} \vee \ln^\frac{3}{4}\frac{2}{\eta}\right) \right]
\end{equation}
hold. These two result combined with the condition $\eta \in (0,\ 2e^{-1}]$ give the conclusion of the Theorem.
\end{proof}


\begin{proof}[Proof of Theorem \ref{thm: risk bound}]
We place ourselves again on an event of probability at least $1-\eta$ obtained from Theorem \ref{thm: bound on the spectrum} with  sample size satisfying 
\begin{equation*}
    N \geq C (L_x^2 \vee L_x^4) \frac{T\ln(T \vee \eta^{-1})}{\delta^2}.
\end{equation*}
The Prediction risk bound is obtained by taking the conditional expectation given $X$ on equation \eqref{eq: loss decomposition}:
\begin{align*}
    \mathcal{R}(\hat{a}) - \mathcal{R}(a) =\frac{1}{N} \E\left(-|X\hat{a}-Xa|_2^2|\, X\right)+\frac{2}{N}\E\left(\innerl{X\hat{a}-Xa}{X\hat{a}-f}|\, X\right). 
\end{align*}
Using the normal equation \eqref{eq: normal equation} we obtain,
\begin{align*}
   \mathcal{R}(\hat{a}) &= \mathcal{R}(a)  -\frac{1}{N}\E\left(|X\hat{a}-Xa|_2^2 - 2\innerl{\hat{a}-a}{X^\top \varepsilon}|\, X\right)\\
   &\leq \mathcal{R}(a)  -\E\left(\frac{1}{N}|X(\hat{a}-a)|_2^2-2|\hat{a}-a|_2\left|\frac{X^\top \varepsilon}{N}\right|_2\Big|\, X\right)\\
   &\leq \mathcal{R}(a)  -\E\left( s_{\min}(\hat{\Sigma})|\hat{a}-a|_2^2 -2|\hat{a}-a|_2\left|\frac{X^\top \varepsilon}{N}\right|_2\Big|\, X\right). 
\end{align*}
On this event we have $1-\delta \leq s_{\min}(\hat{\Sigma})) \leq s_{\max}(\hat{\Sigma})) \leq 1+\delta$ which in tern implies,
\begin{align}
   \mathcal{R}(\hat{a}) &\leq \mathcal{R}(a) +\E\left(-(1 - \delta)|\hat{a}-a|_2^2+2|\hat{a}-a|_2\left|\frac{X^\top \varepsilon}{N}\right|_2\Big|\, X\right)\nonumber \\
   &\leq  \mathcal{R}(a) +\frac{1}{1 - \delta}\E\left(\left|\frac{X^\top \varepsilon}{N}\right|_2^2\Big|\, X\right), \label{eq: risk bound main inequality}
\end{align}
where, again, we used the elementary identity $-ab^2+2cb \leq c^2/a$ in the last inequality. The conditional expectation on the other hand is given by 
\begin{align*}
    \E\left(\left|\frac{X^\top \varepsilon}{N}\right|_2^2\Big|\, X\right) &= \frac{1}{N} \E\left(\frac{\tr (\varepsilon^\top X X^\top \varepsilon)}{N}\Big|\, X\right) \\
    &= \frac{1}{N} \frac{\tr (\E(\varepsilon \varepsilon^\top) X X^\top )}{N} \leq \frac{\tr(\hat{\Sigma})}{N}\sigma_\varepsilon^2,
\end{align*}
which together with the risk bound in \eqref{eq: risk bound main inequality} and the fact that $s_{\max}(\hat{\sigma})\leq 1+\delta$ yield
\begin{align*}
   \mathcal{R}(\hat{a}) - \mathcal{R}(a) \leq \frac{1 + \delta}{1 - \delta}\frac{T}{N}\sigma_\varepsilon^2.
\end{align*}
which concludes the proof of the Theorem.
\end{proof}

\section{Optimality of the bounds and analysis of the Gaussian noise case}
In this section, we check the optimality of our bounds for the least squares estimator
by obtaining a matching lower bound when the noise is Gaussian. Consider the case where $(\varepsilon_t)_{t=1}^N$ is a sequence of independent identically distributed Gaussian random variables $\mathcal{N}(0,\sigma_x^2)$ and $(X_t)_{t=1}^N$ are the time-shifted design covariates defined in \eqref{eq:time_shifted} with independent centered unit variance sub-Gaussian entries of common $\psi_2$-norm upper bound $L_x$.

First note that $\Hat{a}$ is conditionally unbiased since  
\begin{align*}
    \E(\hat{a}|X)= \E ((X^\top X)^\dagger X^\top y |X)= \E((X^\top X)^\dagger X^\top (Xa + \varepsilon))|  X ) = a.
\end{align*}
Hence, we can use Cram\'er-Rao theory \cite{Rao1992} to derive a lower bound performance for such unbiased estimator in terms of its mean square error loss, defined as
\begin{equation}\label{MSE}
    \MSE (\hat{a}) = \E(|\hat{a}-a|_2^2).
\end{equation}
Since the $(y_t)_{t=1}^N$ are conditionally independent, the conditional distribution of $y=(y_t)_{t=1}^N$ given $x=(x_t)_{t=2-T}^N$, parameterized by $a$, factors as
\begin{align*}
            f_{y | x}(y|x;a) & = \prod_{t=1}^N f_{y_t|x}(y_t|x;a).  
\end{align*}
 The log-likelihood function becomes
\begin{align*}
            \mathcal{L}(Y, X, a) & = \ln f_{y | x}(y|x;a)= \sum_{t=1}^N\ln( f_{y_t|x}(y_t|x;a) ) \\   
            & = -\frac{N}{2} \ln(2\pi \sigma_\varepsilon^2) - \frac{1}{2\sigma_\varepsilon^2} \sum_{t=1}^{N} (y_t - a^\top X_t)^2. 
\end{align*}
Differentiating twice with respect to the parameter $a$, we obtain the conditional Fisher information matrix 
\begin{align*}
    \mathcal{I}_T(a | X) &= -\nabla_a^2 \mathcal{L}(Y, X, a) = \frac{1}{\sigma_\varepsilon^2} \sum_{t=1}^N X_t X_t^\top  \\
    &= \frac{1}{\sigma_\varepsilon^2}  X^\top X = \frac{N}{\sigma_\varepsilon^2} \hat{\Sigma}.
\end{align*}
The following proposition provides bounds for the expected values of the spectrum and the trace of the sample covariance matrix of FIR models and will be useful for deriving a lower bound estimate of the MSE. 
\begin{proposition}\label{prop: bounds on the expected operator norm}
The following estimate holds for the singular values of the sample covariance matrix:
\begin{align}
    \E(s_i(\hat{\Sigma})) \leq 1 + CL_x^2\left(\frac{T}{N} \ln(T) + \sqrt{\frac{T}{N} \ln(T)}  \right). 
\end{align}
Furthermore, the following lower bound for the trace of its inverse holds:
\begin{align*}
    \E(\tr(\hat{\Sigma}^\dagger)) \geq T \left\{1 -  CL_x^2\left(\frac{T}{N} \ln(T) + \sqrt{\frac{T}{N} \ln(T)} \right)\right\}.
\end{align*}

\end{proposition}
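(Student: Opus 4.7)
The plan is to derive both statements from the concentration estimate in Theorem~\ref{thm: bound on the spectrum}: the upper bound on the singular values comes from integrating the high-probability tail on $|\hat{\Sigma}-I_T|_{S_\infty}$, and the lower bound on $\tr(\hat{\Sigma}^\dagger)$ is obtained by a short Jensen-style argument that recycles the first estimate.

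For the first statement, since $\hat{\Sigma}$ is symmetric positive semi-definite, its singular values coincide with its eigenvalues, and
\[
s_i(\hat{\Sigma})\le s_{\max}(\hat{\Sigma})=\lambda_{\max}(\hat{\Sigma})\le 1+|\hat{\Sigma}-I_T|_{S_\infty},
\]
so taking expectation reduces the claim to a bound on $\E(|\hat{\Sigma}-I_T|_{S_\infty})$. Write $A=cL_x^2(\tfrac{T\ln T}{N}+\sqrt{\tfrac{T\ln T}{N}})$; Theorem~\ref{thm: bound on the spectrum} then states that for every $u\ge 1$,
\[
\mathbb{P}\!\left(|\hat{\Sigma}-I_T|_{S_\infty}>A+cL_x^2\left(\tfrac{T}{N}u+\sqrt{\tfrac{T}{N}}\sqrt{u}\right)\right)\le e^{-u}.
\]
I would then apply the layer-cake formula $\E(Z)=\int_0^\infty \mathbb{P}(Z>t)\,dt$, split at the threshold $t_0$ corresponding to $u=1$, bound the lower piece by $t_0\simeq A$, and change variables to $u$ in the upper piece. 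The integrals $\int_1^\infty e^{-u}(T/N)\,du$ and $\int_1^\infty e^{-u}\sqrt{T/N}/\sqrt{u}\,du$ are both of order $L_x^2(T/N+\sqrt{T/N})$, which is dominated by $A$ as soon as $\ln T\gtrsim 1$. Hence $\E(|\hat{\Sigma}-I_T|_{S_\infty})\lesssim A$ and the first statement follows.

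For the second statement, observe that on the event $\mathcal{E}=\{s_{\min}(\hat{\Sigma})>0\}$ every reciprocal eigenvalue satisfies $1/s_i(\hat{\Sigma})\ge 1/s_{\max}(\hat{\Sigma})$, hence $\tr(\hat{\Sigma}^\dagger)\ge T/s_{\max}(\hat{\Sigma})$. By Theorem~\ref{thm: bound on the spectrum}, the event $\mathcal{E}$ has probability at least $1-\eta$ under the standing sample-complexity assumption, so restricting to $\mathcal{E}$ costs at most an $\eta$-factor that can be absorbed by choosing $\eta$ sufficiently small. Jensen's inequality applied to the convex function $x\mapsto 1/x$ on $(0,\infty)$ yields $\E(1/s_{\max}(\hat{\Sigma}))\ge 1/\E(s_{\max}(\hat{\Sigma}))$, and combining this with the first statement and the elementary bound $1/(1+x)\ge 1-x$ valid for $x\ge 0$ gives
\[
\E(\tr(\hat{\Sigma}^\dagger))\ge \frac{T}{1+A}\ge T(1-A),
\]
which is precisely the claimed lower bound.

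The main obstacle will be the tail integration in the first step: one must verify carefully that the $\ln T$ factor inside $A$ genuinely dominates the $u\ge 1$ contributions, so that no spurious $\sqrt{\ln T}$-type term sneaks into the final expectation. The remaining issue, non-invertibility of $\hat{\Sigma}$, is minor and is absorbed by working on the high-probability invertibility event furnished by Theorem~\ref{thm: bound on the spectrum}.
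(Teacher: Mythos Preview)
Your proposal is correct and follows essentially the same route as the paper: integrate the tail bound from Theorem~\ref{thm: bound on the spectrum} to control $\E(s_i(\hat\Sigma))$, then combine Jensen's inequality with $1/(1+x)\ge 1-x$ for the trace lower bound. The only cosmetic difference is that the paper applies Jensen term by term, writing $\E(\tr(\hat\Sigma^\dagger))=\sum_i\E(1/s_i(\hat\Sigma))\ge\sum_i 1/\E(s_i(\hat\Sigma))$, rather than first passing through the pointwise inequality $\tr(\hat\Sigma^\dagger)\ge T/s_{\max}(\hat\Sigma)$; since the first-part estimate is uniform in $i$, both variants yield the identical conclusion.
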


\begin{proof}
A rescaling of $(x_k)_{k=2-T}^N$ by $L_x$ shows that without loss of generality we can assume $L_ x = 1$, we also let
\begin{equation*}
    E :=\frac{T}{N} \ln(T) + \sqrt{\frac{T}{N} \ln(T)} .
\end{equation*}
From Theorem~\ref{thm: bound on the spectrum} we have, for all $t \geq 0$,
\begin{align*}
    \mathbb{P}\left(s_i(\hat{\Sigma})-1-cL_x^2 E \geq t\right) \leq  
    \begin{cases}
    1,& 0 \leq t \leq \displaystyle \sqrt{\frac{T}{N}},\\ \\
    \exp\left(-C \displaystyle \frac{Nt}{T} \right),& \sqrt{\frac{T}{N}} < t \leq \displaystyle 1,\\ \\
    \exp\left(-C \displaystyle \frac{Nt^2}{T} \right),& t > \displaystyle 1.
    \end{cases}
\end{align*}
Hence, 
\begin{align*}
    &\E\left(s_i(\hat{\Sigma})-1-cL_x^2E\right) \leq \int_{0}^{\infty} \mathbb{P}\left(s_i(\hat{\Sigma})-1-cL_x^2E \geq t\right) dt\\
    &\qquad\qquad = \int_{0}^{\sqrt{\frac{T}{N}}} dt+\int_{\sqrt{\frac{T}{N}}}^{1} 
    \exp\left(-C \frac{Nt}{T} \right) dt + \int_{1}^{\infty} \exp\left(-C \frac{Nt^2}{T} \right) dt \\
    &\qquad\qquad \lesssim \sqrt{\frac{T}{N}} + \left[ \frac{T}{N}\exp\left(-C \frac{Nt}{T} \right)\right]^{\sqrt{\frac{T}{N}}}_1 + \frac{\sqrt{\pi}}{2} \sqrt{\frac{T}{N}} \simeq \sqrt{\frac{T}{N}} + o\left(\frac{T}{N}\right),
\end{align*}
where in the last inequality we have used the following gamma integral
\begin{equation*}
    \int_{0}^{\infty} e^{-at^b}dt = \frac{1}{b}a^{-\frac{1}{b}} \Gamma(\frac{1}{b}),
\end{equation*}
valid for all $a >0$ and $b >0$. Thus after rescaling by $L_x$ we have,
\begin{align*}
    \E(s_i(\hat{\Sigma})) &\leq 1 + C\left[\frac{T}{N} \ln(T) + \sqrt{\frac{T}{N} \ln(T)}  + \sqrt{\frac{T}{N}} + o\left(\frac{T}{N}\right)\right]\\
    &\leq 1 + C\left(\frac{T}{N} \ln(T) + \sqrt{\frac{T}{N} \ln(T)} \right). 
\end{align*}
This gives the following lower bound:
\begin{align*}
    \E(\tr(\hat{\Sigma}^\dagger)) &= \E\left(\sum_{i=1}^{T} \frac{1}{s_i(\hat{\Sigma})} \right) \geq \sum_{i=1}^{T} \frac{1}{\E(s_i(\hat{\Sigma}))} \qquad \text{(by Jensen's inequality)}, \\
    &\geq \frac{T}{1 +  CL_x^2\left(\frac{T}{N} \ln(T) + \sqrt{\frac{T}{N} \ln(T)}  \right) } \\ 
    & \geq T \left\{1 -  CL_x^2\left(\frac{T}{N} \ln(T) + \sqrt{\frac{T}{N} \ln(T)}  \right)\right\},
\end{align*}
which in turn yields  the statement of the proposition.
\end{proof}

Thus, the Cram{\'e}r-Rao lower bound for a conditionally unbiased estimator is
\begin{align*}
 \cov(\hat{a}_T ) &= \E(\cov(\hat{a}_T | X)) + \cov(\E(\hat{a}_T | X)) \succeq \E( \mathcal{I}_T(a| X)^\dagger) \\
 &= \sigma_{\varepsilon}^2\E((X^\top X)^\dagger)= \frac{\sigma_{\varepsilon}^2}{N}\E(\hat{\Sigma}^\dagger ).    
\end{align*}
This implies the following lower bound on the mean square error:
\begin{align*}
    \MSE (\hat{a}) &= \E(|\hat{a}-\E\hat{a}|_2^2) +\E(|\hat{a}- a|_2^2)= \frac{\sigma_{\varepsilon}^2}{N}\E(\tr(\hat{\Sigma}^\dagger)) \\
    &\geq \frac{T\sigma_{\varepsilon}^2}{N}\left\{1 -  CL_x^2\left( \frac{T}{N} \ln(T) + \sqrt{\frac{T}{N} \ln(T)}  \right)\right\}.
\end{align*}
In the linear case, equation \eqref{eq: prediction error decomposition} in the derivation of Theorem \ref{thm: estimation error} gives us,
\begin{align*}
        | \hat{a} - a|^2_2  \leq \frac{1}{s^2_{\min}(\hat{\Sigma})}   \left|\frac{1}{N}X^\top \varepsilon \right|^2_2. 
\end{align*}
Taking the conditional expectation on $X$ gives us
\begin{align*}
        \E(|\hat{a} - a|^2_2 \,|\, X) &\leq \frac{1}{Ns^2_{\min}(\hat{\Sigma})}   \E(\frac{1}{N} \left|X^\top \varepsilon \right|^2_2\,|\, X)\\
        &= \frac{1}{Ns^2_{\min}(\hat{\Sigma})} \frac{\tr (\E(\varepsilon \varepsilon^\top) X X^\top )}{N} \leq \frac{\tr(\hat{\Sigma})}{Ns^2_{\min}(\hat{\Sigma})}\sigma_\varepsilon^2. 
\end{align*}
Theorem \ref{thm: bound on the spectrum} implies that in an event of probability at least $1-\eta$ with $\eta \in (0, e^{-1})$ and under the condition
\begin{equation*}
    N \geq C (L_x^2 \vee L_x^4) \frac{T\ln(T \vee \eta^{-1})}{\delta^2},
\end{equation*}
we have $1-\delta \leq s_{\min}(\hat{\Sigma}) \leq s_{\max}(\hat{\Sigma}) \leq 1 +\delta$, which means
\begin{equation*}
    \E(|\hat{a}-a|_2^2 \,|\, X) \leq \frac{1 + \delta}{(1 - \delta)^2}\frac{T}{N}\sigma_\varepsilon^2.
\end{equation*}

This rate matches with the rate of the obtained CRLB, so the least-squares estimator is efficient with high probability and with an asymptotic rate of $\sigma_\varepsilon^2 T/N$. Since this estimator is also conditionally unbiased this also implies that it is rate optimal in the class of conditionally unbiased estimators and that our upper bound is tight up to a correction term of the order $\frac{T^{3/2}\ln^{1/2}(T)}{N^{3/2}}$.   

\section{Appendix}
 
\begin{proof}[Proof of Theorem~\eqref{thm: bound on the spectrum}]
A rescaling of $(x_k)_{k=2-T}^N$ by $L_x$ shows that without loss of generality we can assume $L_ x = 1$. We start by defining, for $k \in \llbracket1,N+1-T\rrbracket$, the following shifted covariates:
\begin{align*}
L_k = \begin{bmatrix}
0 & \cdots & 0 & x_1 & x_2 &\cdots &x_{N+1-T} &0 &\cdots &0
\end{bmatrix}^\top,    
\end{align*}
where $x_1$ is at $k$th position.Then we define the matrices 
\begin{equation*}
    L = [L_1 L_2 \dots L_T] \qquad \textrm{and} \qquad S = X - L,
\end{equation*}
to get a decomposition $X = L + S$ where $L$ and $S$ are independent of each other and have a shifted diagonal structure. Thus, we have
\begin{equation*}
    X^\top X = L^\top L + S^\top S + S^\top L + L^\top S.
\end{equation*}
Using this decomposition the operator norm of deviation of $X^\top X$ is upper bounded by
\begin{align}
    &|X^\top X - \E (X^\top X) |_{S_\infty} \nonumber \\
    &\leq  T-1+ |L^\top L - (N+1-T)I_T|_{S_\infty} +  |S^\top S|_{S_\infty} + 2|S^\top L|_{S_\infty}. \label{eq: 3 terms to upper-bound}
\end{align}

The proof will proceed by deriving high probability bounds for the last three terms while showing that the contribution of the last two is negligible in comparison to the first both in terms of magnitude and probability. 

We start with the second  term $|S |_{S_\infty}$. Since $N \geq 2T-2$ the matrix $S$ has two separate parts, so we define the lower and upper triangular parts of $S$ respectively by 
\begin{equation*}
S_l = \begin{bmatrix}
x_{N-T+2} &0 &\dots  &0 \\
x_{N+1-T} &\ddots &\ddots  &\vdots \\
\vdots &\ddots &\ddots &0 \\
x_N &\dots &x_{N+3-T} &x_{N+2-T}
\end{bmatrix} \quad \textrm{and} \quad    S_u = \begin{bmatrix}
x_0  &x_1 &\dots &x_{2-T} \\
0 &\ddots  &\ddots &\vdots \\
\vdots &\ddots &\ddots  &x_1 \\
0 &\dots &0 &x_0 
\end{bmatrix}.
\end{equation*}
Hence we obtain the following
\begin{equation} \label{eq: 2 norms of random vectors}
    |S |_{S_\infty} = |S_l |_{S_\infty} + |S_u |_{S_\infty} = \left(\sum \limits_{j=N+2-T}^{N} x_j^2\right)^{1/2} + \left(\sum \limits_{j=2-T}^{0} x_j^2\right)^ {1/2}.
\end{equation}
The bound reduces to a bound on the norms of two random vectors which can be estimated using Bernstein's inequality for sub-Exponential  random variables: for all $u \geq 1$ 
\begin{equation*}
    \mathbb{P}\left(\left|\left(\sum_{j=N+2-T}^{N} x_j^2\right)^ {1/2} - \sqrt{T-1} \right| \geq c u \right) \leq 2 \exp{\left( -u^2 \right)}.
\end{equation*}
A similar inequality holds for the last term in \eqref{eq: 2 norms of random vectors}. Using a union bound we get with probability at least $1- \frac{1}{3}e^{-u}$ for all $u \geq 1$:
\begin{equation}
    |S |_{S_\infty} = |S_l |_{S_\infty} + |S_u |_{S_\infty} \lesssim \sqrt{T} + \sqrt{u}.
\end{equation}
That is with the same probability:
\begin{equation}\label{eq : main HDP1}
    |S^\top S |_{S_\infty} \lesssim T + u.
\end{equation}
We will bound the second term by relating the operator norm of random Toeplitz matrices to the supremum of a multiplication process, a strategy that appeared first in the seminal work of \citet{meckes2007} and was further developed by \citet{10.1007/978-3-0348-0490-5_16}.

Since the columns of $L$ are shifted versions of each others, we have 
\begin{align*}
L^\top L=\begin{bmatrix}
\innerl{L_1}{L_1} &\innerl{L_1}{L_2} &\innerl{L_1}{L_3} &\dots &\innerl{L_1}{L_T} \\
\innerl{L_1}{L_2} &\innerl{L_1}{L_1} &\innerl{L_1}{L_2} &\ddots &\innerl{L_2}{L_T} \\
\innerl{L_1}{L_3} &\innerl{L_1}{L_2} &\innerl{L_1}{L_1} &\ddots &\innerl{L_3}{L_T} \\
\vdots &\ddots &\ddots &\ddots &\vdots\\
\innerl{L_1}{L_T} &\innerl{L_2}{L_T} &\innerl{L_3}{L_T} &\dots &\innerl{L_1}{L_1}
\end{bmatrix}. 
\end{align*}
Define the  Toeplitz operator $\mathcal{T}:\, l_2(\mathbb{Z}) \to l_2(\mathbb{Z})$ by the infinite matrix 
\begin{equation*}
    \mathcal{T} = \left[\innerl{L_j}{L_k} \mathds{1}_{|j-k|\leq T-1}\right]_{(j,k)\in \mathbb{Z}^2} - (N+1-T)I_\mathbb{Z}.
\end{equation*}
The corresponding multiplication polynomial defined for $x \in [0,1]$ is given by 
\begin{equation*}
    p(x) = \mathcal{T}_0 + 2\sum \limits_{l=1}^{T-1} \mathcal{T}_l \cos{(2 \pi l x)}.
\end{equation*}
Since $L^\top L - (N+1-T)I_T$ is a submatrix of $\mathcal{T}$, we have
\begin{equation}\label{eq : main HDP2}
    |L^\top L - (N+1-T)I_T|_{S_\infty} \leq |\mathcal{T}|_{2 \to 2} = \sup \limits_{x \in [0,1]} |p(x)|,
\end{equation}
where $|\cdot|_{2 \to 2}$ stands for the operator norm from $l_2(\mathbb{Z})$ to $l_2(\mathbb{Z})$.
Rewriting the last polynomial $p(x)$ we obtain the following upper bound for the supremum
\begin{equation*}
    \sup \limits_{x \in [0,1]} |p(x)| =  \sup \limits_{x \in [0,1]} \left|\sum \limits_{j=1}^{N+1-T} (x_j^2-1)+\sum \limits_{0\leq j < k \leq N+1-T} x_j x_k \mathds{1}_{|j-k|\leq T-1} \cos{(2\pi|j-k| x)}\right|. 
\end{equation*}
Consider the $(N+1-T) \times (N+1-T)$ matrix $H$ with $(j,k)$ entries 
\begin{equation*}
    \mathcal{H}_{jk}(x) = 1 + \mathds{1}_{|j-k|\leq T-1} \cos{(2\pi|j-k| x)}.
\end{equation*}
Using a version of Hanson-Wright inequality essentially due to \citet{rudelson2013} we end up with the following inequality: for all $u \geq 0$, 
\begin{equation*}
    \mathbb{P}\left( |p(x)-p(y)|\geq \sqrt{u} d_{S_2} (\mathcal{H}(x),\mathcal{H}(y)) + u d_{S_\infty}  (\mathcal{H}(x),\mathcal{H}(y)) \right) \leq 2 \exp{(-cu)}.
\end{equation*}
This process is a mixed tail process with the pseudo-metric spaces defined on $[0,1]$ by both $| \mathcal{H}(\cdot) |_{S_2}$ and $| \mathcal{H}(\cdot) |_{S_\infty}$. The generic chaining result proved independently by~\citet[Theorem ~$2.2.23$]{talagrand2006generic} and ~\citet[Theorem ~$3.5$]{dirksen2015} provides us with the following bound for the supremum of such mixed tail process for $u \geq 1$:
\begin{equation}\label{eq: HDP0 on polynomial}
    \mathbb{P}\left( \sup \limits_{x \in [0,1]} |p(x)| \geq C \left( E +\sqrt{u} V + u U \right)\right) \leq 2 \exp{(-u)},
\end{equation}
where 
\begin{align*}
    E &= \gamma_2([0,1],| \mathcal{H}(\cdot) |_{S_2}) + \gamma_1([0,1],| \mathcal{H}(\cdot) |_{S_\infty}), \\
    V &= \Delta_{S_2} (\mathcal{H}^{-1}([0,1])),\quad
    U= \Delta_{S_\infty}  (\mathcal{H}^{-1}([0,1])).
\end{align*}

To conclude the proof, it suffices to estimate these three terms. Let us first start with the terms which involves the norm $|\cdot|_{S_\infty}$. An estimate of $|\mathcal{H}(x)|_{S_\infty}$ is obtained by seeing $\mathcal{H}(x)$ as a sub-matrix of an infinite Toeplitz matrix $\Tilde{\mathcal{H}}(x)$ defined by,
\begin{equation*}
    \Tilde{\mathcal{H}}(x) = \left[\mathds{1}_{|j-k|\leq T-1} \cos{(2\pi|j-k| x)}\right]_{(j,k)\in \mathbb{Z}^2} + I_\mathbb{Z}.
\end{equation*}
and the corresponding multiplication polynomial
\begin{align*}
    q(z) &= 1 + 2\sum \limits_{l=1}^{T-1} \cos{(2 \pi l x)} \cos{(2 \pi l z)}\\
    &= 1 + \sum \limits_{l=1}^{T-1} \cos{(2 \pi l (x-z))} +\cos{(2 \pi l (x+z))}.
\end{align*}
We have
\begin{align*}
    |\mathcal{H}(x)|_{S_\infty} \leq |\Tilde{\mathcal{H}}(x)|_{2 \to 2} = \sup \limits_{z \in [0,1]} |q(z)|.
\end{align*}
The radius $U$ becomes
\begin{equation}\label{eq : HDP1}
    U= \Delta_{S_\infty}  (\mathcal{H}^{-1}([0,1])) \leq 2T. 
\end{equation}
Since the cosine function is $1$-Lipschitz, we have 
\begin{align*}
    d_{S_\infty}  (\mathcal{H}(x),\mathcal{H}(y)) &= \sup \limits_{z \in [0,1]} \sum \limits_{l=1}^{T-1} \cos{(2 \pi l (x-z))} - \cos{(2 \pi l (y-z))} \\
    &\qquad \qquad +\cos{(2 \pi l (x+z))} - \cos{(2 \pi l (y + z))}\\
    &\leq 4\pi \sum \limits_{l=1}^{T-1} l|x-y| \leq 2 \pi T^2 |x-y|.
\end{align*}
The $\gamma_1$ functional is evaluated as 
\begin{align}
    &\gamma_1([0,1],| \mathcal{H}(\cdot) |_{S_\infty}) \lesssim \int_{0}^{ \Delta_{S_\infty}  (\mathcal{H}^{-1}([0,1]))} \ln N([0,1],2 \pi T^2 |\cdot|,u)du \nonumber \\
    &= \int_{0}^{2T} \ln \frac{2 \pi T^2}{u} du = 2T \ln{(2 \pi T^2)} -2T \ln(2T) + 2T \simeq T \ln(T). \label{eq : HDP2}
\end{align}
We now turn to the terms involving the norm $|\cdot|_{S_2}$. The pseudo-norm $|\mathcal{H}(x)|_{S_2}$ is
\begin{align*}
    |\mathcal{H}(x)|_{S_2} = \left(\sum \limits_{l=0}^{T-1} (N-l+1) \cos^2{(2 \pi l x)}\right)^{1/2}.
\end{align*}
The radius $V$ satisfies
\begin{equation}\label{eq : HDP3}
    V= \Delta_{S_2}  (\mathcal{H}^{-1}([0,1])) \leq \sqrt{NT}. 
\end{equation}
Again the cosine function being $1$-Lipschitz, we have 
\begin{align*}
    d_{S_2}  (\mathcal{H}(x),\mathcal{H}(y)) &= \left(\sum \limits_{l=0}^{T-1} (N-l+1) \left( \cos{(2 \pi l x)} - \cos{(2 \pi l y)} \right)^2\right)^{1/2}\\
    &\leq \frac{\pi}{3^{1/2}} \sqrt{N} (T+1)^{3/2}|x-y|.
\end{align*}
The $\gamma_2$ functional satisfies
\begin{align}
    \gamma_2([0,1],| \mathcal{H}(\cdot) |_{S_2})  &\lesssim \int_{0}^{ \Delta_{S_2}  (\mathcal{H}^{-1}([0,1]))} \ln N([0,1],\frac{\pi}{3^{1/2}} \sqrt{N} (T+1)^{3/2}|\cdot|,u)du \nonumber \\
    &= \int_{0}^{\sqrt{NT}} \left( \ln \frac{\frac{\pi}{3^{1/2}} \sqrt{N} (T+1)^{3/2}}{u}\right)^{1/2} du  \nonumber \\
    &= \frac{\pi}{\sqrt{3}} \sqrt{N} (T+1)^{3/2} \int_{\sqrt{\ln \left(\frac{\pi}{3^{1/2}} \frac{(T+1)^{3/2}}{T^{1/2}}\right)}}^{\infty}\,\, t^2 \exp(-t^2/2) du \nonumber \\
    &\lesssim  \sqrt{T N \ln(T)}, \label{eq: HDP4}
\end{align}
where in the last step we did and integration by parts and used ~\cite[Formula~$7.1.13$]{Abramowitz:1974:HMF:1098650}.

\noindent Putting this last result together with \eqref{eq : HDP1}, \eqref{eq : HDP2} and \eqref{eq : HDP3} in \eqref{eq: HDP0 on polynomial} enables us to bound the supremum of the stochastic polynomial $p(x)$ with probability at least $1-\frac{1}{3} \exp{(-u)}$ for $u \geq 1$ and with a possibly greater $C$:
\begin{equation*}
     \sup \limits_{x \in [0,1]} |p(x)| \leq C \left( T \ln (T) + \sqrt{T N\ln(T)} + \sqrt{T N }\sqrt{u} +  Tu \right).
\end{equation*}
Using this result in conjunction with \eqref{eq : main HDP2} gives us with the same probability
\begin{equation}\label{eq: second main part}
       |L^\top L - (N+1-T)I_T|_{S_\infty} \lesssim  T \ln (T) + \sqrt{T N\ln(T)} + \sqrt{T N}\sqrt{u} +  Tu.
\end{equation}
We will bound the third term in \eqref{eq: 3 terms to upper-bound} the same way we did for the second. Observe that
\begin{align*}
L^\top S=\begin{bmatrix}
0 &\innerl{L_1}{S_2} &\innerl{L_1}{S_3} &\dots &\innerl{L_1}{S_T} \\
\innerl{L_T}{S_2} & 0 &\innerl{L_T}{S_2} &\ddots &\innerl{L_2}{S_T} \\
\innerl{L_T}{S_3} &\innerl{L_1}{S_2} &0 &\ddots &\innerl{L_T}{L_T} \\
\vdots &\ddots &\ddots &\ddots &\vdots\\
\innerl{L_T}{S_T} &\innerl{L_T}{S_{T-1}} &\dots &\innerl{L_T}{S_2} &0
\end{bmatrix}. 
\end{align*}
Define
\begin{align*}
    V = [x_1,\dots,x_T,x_{T-1},\dots,x_0,x_{N-T+2},\dots,x_N,x_{N-2T+1},\dots,x_{N-T+1}],
\end{align*}
and
\begin{align*}
    w(x) =\exp{(2 \pi i x)}.
\end{align*}
By the same reasoning as for the second term in \eqref{eq: 3 terms to upper-bound}
we define the multiplication polynomial over $x \in [0,1]$ by the random quadratic form
\begin{equation*}
    r(x) = \sum \limits_{l=1}^{T}\innerl{L_1}{S_l} w^l(x) + \innerl{L_T}{S_l} w^{-l}(x) = V^\top \mathcal{W}(x)V.
\end{equation*}
Than
\begin{align}\label{eq: random quadratic form}
    |L^\top S|_{S_\infty} = \sup \limits_{x \in [0,1]} |V^\top \mathcal{W}(x)V|. 
\end{align}
 This matrix has only two isolated square lower triangular blocks with non-zero elements 
\begin{align*}
\mathcal{W}_1(x)=\begin{bmatrix}
w(x) &0 &0 &\dots &0 \\
w^2(x) & w(x) &0 &\ddots &0 \\
w^3(x) &w^2(x) & w(x) &\ddots &0 \\
\vdots &\ddots &\ddots &\ddots &\vdots\\
w^T(x) &w^{T-1}(x) & w^{T-2}(x) &\dots &w(x)
\end{bmatrix}. 
\end{align*}
and
\begin{align*}
\mathcal{W}_2(x)=\begin{bmatrix}
w^{-1}(x) &0 &0 &\dots &0 \\
w^{-2}(x) & w^{-1}(x) &0 &\ddots &0 \\
w^{-3}(x) &w^{-2}(x) & w^{-1}(x) &\ddots &0 \\
\vdots &\ddots &\ddots &\ddots &\vdots\\
w^{-T}(x) &w^{-(T-1)}(x) & w^{-(T-2)}(x) &\dots &w^{-1}(x)
\end{bmatrix}. 
\end{align*}
We observe that
\begin{align}
    d_{S_\infty}  (\mathcal{W}_1(x),\mathcal{W}_1(y)) &= \left( \sum \limits_{l=1}^{T} |w^k(x)-w^k(y)|^2_2 \right)^{1/2} \vee \left( \sum \limits_{l=1}^{T} |w^{-k}(x)-w^{-k}(y)|^2_2 \right)^{1/2} \nonumber\\
    &= \left( \sum \limits_{l=1}^{T} 2(1 - \cos(2 \pi k (x-y))) \right)^{1/2} \nonumber\\
    &= 2 \left( \sum \limits_{l=1}^{T} \sin^2(\pi k (x-y)) \right)^{1/2}. \label{eq: infinity distance estimate}
\end{align}
An estimate for $d_{S_2}  (\mathcal{W}(x),\mathcal{W}(y))$ is given by
\begin{align}
    d_{S_2}  (\mathcal{W}(x),\mathcal{W}(y))= \left(\sum \limits_{l=0}^{T} l (|w^{l}(x) - w^{l}(y)|^2_2 + |w^{-l}(x) - w^{-l}(y)|^2_2)\right)^{1/2}.\label{eq: 2 distance estimate}
\end{align}
As above, we use Hanson-Wright inequality to show that the random quadratic form in \eqref{eq: random quadratic form} defines a mixed tail process with pseudo-metrics associated to the distances in \eqref{eq: infinity distance estimate} and \eqref{eq: 2 distance estimate}, than we use the generic chaining result of \citet[Theorem ~$2.2.23$]{talagrand2006generic} and ~\citet[Theorem ~$3.5$]{dirksen2015} to get the desired bound for $u \geq 1$
\begin{equation}\label{eq: HDP10 on polynomial}
    \mathbb{P}\left( \sup \limits_{x \in [0,1]} |r(x)|\geq C \left( E +\sqrt{u} V + u U \right)\right) \leq \frac{1}{3} \exp{(-u)}.
\end{equation}
Here, 
\begin{align*}
    E &= \gamma_2([0,1],| \mathcal{W}(\cdot) |_{S_2}) + \gamma_1([0,1],| \mathcal{W}(\cdot) |_{S_\infty}), \\
    V &= \Delta_{S_2} (\mathcal{W}^{-1}([0,1])),\quad
    U= \Delta_{S_\infty}  (\mathcal{W}^{-1}([0,1])).
\end{align*}
Similar to \eqref{eq : HDP1} the radius of the set $[0,1]$ becomes
\begin{equation*}
    U= \Delta_{S_\infty}  (\mathcal{H}^{-1}([0,1])) \leq T. 
\end{equation*}
Similar to \eqref{eq : HDP3}, the radius of the set $[0,1]$ becomes
\begin{equation*}
    V= \Delta_{S_2}  (\mathcal{W}^{-1}([0,1])) = \sup \limits_{x \in [0,1]} \left(\sum \limits_{l=0}^{T} l (|w^{l}(x)|^2_2 + |w^{-l}(x)|^2_2)\right)^{1/2} \leq T. 
\end{equation*}
Since the cosine function is $1$-Lipschitz, we can evaluate the $\gamma_1$ functional same way we did in \eqref{eq : HDP2} to obtain 
\begin{align*}
    \gamma_1([0,1],| \mathcal{H}(\cdot) |_{S_\infty}) \simeq T \ln(T), 
\end{align*}
and the $\gamma_2$ functional similarly to \eqref{eq: HDP4} to obtain
\begin{align*}
    \gamma_2([0,1],| \mathcal{H}(\cdot) |_{S_2}) \lesssim  T \sqrt{\ln(T)}. 
\end{align*}
The last four results  provide all the estimates needed for equation \eqref{eq: HDP10 on polynomial}, so we get with probability at least $1-\frac{1}{3} \exp{(-u)}$ for $u \geq 1$:
\begin{equation*}
     \sup \limits_{x \in [0,1]} |r(x)| \leq C \left( T \ln(T) + T\sqrt{\ln(T)} + T\sqrt{u} +  Tu \right).
\end{equation*}
This implies that with at least the same probability
\begin{equation*}
       |L^\top S|_{S_\infty} \lesssim   T\ln(T) +  Tu.
\end{equation*}
A straight forward union bound using the events of the last display and equations \eqref{eq : main HDP2} and \eqref{eq: second main part} imply that with probability at least $1-e^{-u}$ for all $u \geq 1$ we have:
\begin{equation*}
    |X^\top X - \E (X^\top X) |_{S_\infty} \lesssim \big( T \ln(T) + \sqrt{T N \ln(T)} \\
    + \sqrt{N T}u^{1/2} + Tu \big).
\end{equation*}
The result of the theorem is obtained by dividing by $N$ in the last display and rescaling by $L^2_x$. The bounds on the eigenvalues follow immediately.
\end{proof}

\begin{proof}[Proof of Theorem~\ref{thm: Bound on the multiplier process}]
After re-normalizing the $x_i$'s and $\varepsilon_i$'s by there respective variances, we might assume that $\sigma_x = \sigma_\varepsilon = 1$.  Observe that
\begin{align*}
    \E\left( \left| \frac{1}{N}X^\top \varepsilon \right|_2^2 \right) &= \frac{1}{N^2} \E \left[\sum \limits_{j=1}^T \left(\sum \limits_{i=1}^N \varepsilon_i x_{i+j-1} \right)^2 \right] \\
    &=  \frac{1}{N^2} \sum \limits_{j=1}^T \sum \limits_{i_1,i_2=1}^N \E (\varepsilon_{i_1}\varepsilon_{i_2} x_{i_1+j-1}x_{i_2+j-1}) \\
    &=  \frac{1}{N^2} \sum \limits_{j=1}^T \sum \limits_{i=1}^N \E (\varepsilon_i^2) \E (x_j^2)= \frac{T}{N}.
\end{align*}
We need then to estimate $|| X^\top \varepsilon|_2 -\sqrt{TN}| _{\psi_2}$. For this, we condition on the knowledge of $X$ to deduce that, for unit vectors $w \in \mathbb{R}^N$ and $s \in \mathbb{R}$ we get by sub-Gaussionality for all $i \in \llbracket 1,N\rrbracket$,
\begin{align*}
    \E(\exp(s \varepsilon_i \inner{w}{X_i}) | X) &\leq \exp(cs^2 | \varepsilon_i| _{\psi_2}^2 \inner{w}{X_i}^2) \\
    &\leq \exp(cs^2 L_{\varepsilon}^2 | X_i| ^2).
\end{align*}
Thus, for all covariate vectors $X_i$,
\begin{align*}
    \E(\exp(s | \varepsilon_i X_i|_2 ) | X) 
    &\leq \exp(cs^2 L_{\varepsilon}^2 | X_i| ^2).
\end{align*}
Conditioning again on $X$, the centring and square sum properties of the $\psi_2$ norm for independent sub-Gaussian random variables \cite[Chapter ~$2$]{vershynin_2018} leads to
\begin{align*}
    \left| |X^\top \varepsilon|_2 -\sqrt{TN}\right| _{\psi_2}^2 &\lesssim \left| \sum \limits_{i=1}^T \varepsilon_i X_i \right|_{\psi_2}^2 
    \leq \sum \limits_{i=1}^T |  \varepsilon_i X_i| _{\psi_2}^2  \\ & \lesssim L_{\varepsilon}^2\sum \limits_{i=1}^T | X_i|_2^2 = L_{\varepsilon}^2T\sum \limits_{i=2-T}^{N} x_i^2. 
\end{align*}

For $s \lesssim (\sqrt{TN} L_\varepsilon L_x)^{-1}$, we obtain the following bound on the moment generating function of $| X^\top \varepsilon|_2$:
\begin{align*}
    \E\{\exp[s(| X^\top \varepsilon|_2 -\sqrt{TN})]\} &= \E\{ \E\{\exp[s(| X^\top \varepsilon|_2 -\sqrt{TN})]|X\}\} \\
    &\leq \E(\exp(cs^2 L_\varepsilon^2T\sum_{i=2-T}^N x_i^2) ) \\
    &\leq \E( \exp(cs^2 L_\varepsilon^2TN x_1^2) ) \\
    &\leq \exp(c_1s^4 T^2N^2 L_\varepsilon^4 L_x^4). 
\end{align*} 
This leads to the following probability bound for $s \lesssim (\sqrt{TN} L_\varepsilon L_x)^{-1}$ and all $\rho > 0$:
\begin{align*}
    \mathbb{P} \left( \left| \left|\frac{1}{N}X^\top \varepsilon\right|_2 - \sqrt{\frac{T}{N}} \right| \geq \rho \right) \leq 2\exp(-s N \rho + c_1s^4 T^2N^2 L_\varepsilon^4 L_x^4). 
\end{align*}
Optimizing for the choice of $s$ we obtain that for the value 
$$s \simeq \frac{1}{\sqrt{TN} L_\varepsilon L_x} \wedge \frac{ \rho^\frac{1}{3} }{T^\frac{2}{3} N^\frac{1}{3}  L_\varepsilon^\frac{4}{3}L_x^\frac{4}{3}},$$
with probability at least
\begin{align*}
1-\exp\left[-c\min\left( \left(\frac{N}{T} \right)^\frac{1}{2}\frac{\rho}{L_\varepsilon L_x}, \left(\frac{N}{T}\right)^\frac{2}{3}\frac{\rho^\frac{4}{3}}{  L_\varepsilon^\frac{4}{3}L_x^\frac{4}{3}}\right)\right] 
\end{align*}
it holds that
\begin{equation*}
    \left| \frac{1}{N}X^\top \varepsilon \right|_2 \leq \sqrt{\frac{T}{N}} + \rho,
\end{equation*}
which implies that with probability at least $1-\eta$,
\begin{align*}
    \left| \frac{1}{N}X^\top \varepsilon \right|_2 \leq \sqrt{\frac{T}{N}} \left[1+c L_\varepsilon L_x \left(\ln\frac{1}{\eta} \vee \ln^\frac{3}{4}\frac{1}{\eta} \right) \right]. 
\end{align*}
The conclusion of the Theorem follows after taking out the re-normalization.
\end{proof}

\bibliographystyle{elsarticle-harv} 
\bibliography{bibliography}

\end{document}